\newtheorem{definition}{Definition}[section] 
\newtheorem{theorem}[definition]{Theorem}
\newenvironment{theorem*}[1]{{\bf Theorem #1} \begin{itshape}}{\end{itshape}}
\newtheorem{lemma}[definition]{Lemma}
\newenvironment{corollary*}[1]{{\bf Corollary #1} \begin{itshape}}{\end{itshape}}
\newenvironment{proposition*}[1]{{\bf Proposition #1} \begin{itshape}}{\end{itshape}}
\theoremstyle{remark}
\newtheorem{remark}[definition]{Remark}
\newcommand{\N}{\mathbb{N}}
\newcommand{\Z}{\mathbb{Z}}
\newcommand{\Q}{\mathbb{Q}}
\newcommand{\R}{\mathbb{R}}
\newcommand{\Zp}{\mathbb{Z}_{p}}
\newcommand{\cF}{\mathcal{F}}
\newcommand{\cC}{\mathcal{C}}
\newcommand{\cH}{\mathcal{H}}
\newcommand{\cQ}{\mathcal{Q}}
\newcommand{\cN}{\mathcal{N}}
\newcommand{\ba}{\mathbf{a}}
\newcommand{\bv}{\mathbf{v}}
\newcommand{\bp}{\mathbf{p}}
\newcommand{\bi}{\mathbf{i}}
\newcommand{\bt}{\boldsymbol{\tau}}
\newcommand{\bx}{\textbf{x}}
\newcommand{\bu}{\textbf{u}}
\newcommand{\dimh}{\dim_{\mathcal{H}}}
\title{liminf approximation sets for abstract rationals}
\author[Mumtaz Hussain]{Mumtaz Hussain}
\address{Mumtaz Hussain,  Department of Mathematical and Physical Sciences,  La Trobe University, Bendigo 3552, Australia. }
\email{m.hussain@latrobe.edu.au}
\author{ Benjamin Ward}
\address{Ben Ward,  Department of Mathematical and Physical Sciences,  La Trobe University, Bendigo 3552, Australia. }
\email{Ben.Ward@latrobe.edu.au}
\date{\today}
\date{\today}
\begin{document}

\maketitle
\begin{abstract}
  The Jarn\'ik-Besicovitch theorem is a fundamental result in metric number theory which concerns the Hausdorff dimension for certain limsup sets. We discuss the analogous problem for liminf sets. Consider an infinite sequence of positive integers, $S=\{q_{n}\}_{n\in\N}$, exhibiting exponential growth. For a given $n$-tuple of functions denoted as $\Psi:=~(\psi_1, \ldots,\psi_n)$, each of the form $\psi_{i}(q)=q^{-\tau_{i}}$ for $(\tau_{1},\dots,\tau_{n})\in\R^{n}_{+}$, we calculate the Hausdorff dimension of the set of points that can be $\Psi$-approximated for all sufficiently large $q\in S$. We prove this result in the generalised setting of approximation by abstract rationals as recently introduced by Koivusalo, Fraser, and Ramirez (LMS, 2023). Some of the examples of this setting include the real weighted inhomogeneous approximation, $p$-adic weighted approximation, Diophantine approximation over complex numbers, and approximation on missing digit sets.
\end{abstract}
\section{Introduction}
Dirichlet's Theorem in Diophantine approximation asserts that for any $n$-tuple of positive real numbers $\bt=(\tau_{1},\dots,\tau_{n})$ with $\tau_{1}+\dots+\tau_{n}=1$, every $\bx=(x_{1},\dots, x_{n})\in [0,1]^{n}$, and any $N\in\N$ there exists $1\leq q\leq N$ such that
\begin{equation*}
    \| q x_{i}\|<N^{-\tau_{i}} \quad (1\leq i \leq n)\, ,
\end{equation*}
where $\|\cdot\|$ denotes the minimum distance to the nearest integer. A corollary one can deduce from Dirichlet's theorem is that for every irrational $\bx\in [0,1]^{n}$ there exists an infinite sequence of integers $\{q_{j}(\bx)\}_{j\in\N}$ such that
\begin{equation*}
     \| q_{j}(\bx)x_{i}\|<q_{j}(\bx)^{-\tau_{i}} \quad (1\leq i\leq n)\, ,
\end{equation*}
for every $j \in \N$. In order to keep the statement true for all $\bx \in [0,1]^{n}$ one cannot improve on the size of the summation of the components of $\bt$. Indeed, if
\begin{equation*}
    \tau_{1}+\dots + \tau_{n} >1\, ,
\end{equation*}
then an easy application of the Borel-Cantelli lemma implies that Lebesgue-almost no points in $[0,1]^{n}$ can be $\bt$-approximated by rationals infinitely often. However, the following was proven by Rynne \cite{R98}. \\
\begin{theorem}[Rynne, 1998]
For $n$-tuple $\bt$ with $\tau_{1}+\dots + \tau_{n}>1$ then
\begin{equation*}
    \dimh \left\{ x\in[0,1]^{n}:\begin{array}{c}\|q x_{i}\|<q^{-\tau_{i}} \quad (1\leq i \leq n) \\ \text{ for infinitely many } q \in \N \end{array} \right\}=\min_{1\leq j \leq n} \left\{\frac{n+1+\sum\limits_{i:\tau_{j}>\tau_{i}}(\tau_{j}-\tau_{i})}{\tau_{j}+1}\right\}\, ,
\end{equation*}
where $\dimh$ denotes the Hausdorff dimension.
\end{theorem}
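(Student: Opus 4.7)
The plan is to establish the two standard inequalities separately. For the upper bound I use the natural cover of the set as a limsup set of hyperrectangles and choose the covering scale in $n$ different ways, once per index $j$. For the lower bound I apply a mass transference principle from hyperrectangles to hyperrectangles to the ubiquitous system of rationals inherited from Dirichlet's theorem. Throughout, write
\[
R(q,\bp)\ :=\ \prod_{i=1}^{n}\Bigl[\tfrac{p_i}{q}-q^{-1-\tau_i},\ \tfrac{p_i}{q}+q^{-1-\tau_i}\Bigr],\qquad s_j\ :=\ \frac{n+1+\sum_{i:\tau_j>\tau_i}(\tau_j-\tau_i)}{\tau_j+1}.
\]

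For the upper bound one uses that the set under consideration is contained in $\bigcap_{Q\ge 1}\bigcup_{q\ge Q}\bigcup_{\bp\in\{0,\ldots,q\}^n} R(q,\bp)$. Fix $j$ and cover each $R(q,\bp)$ by cubes of side $2q^{-1-\tau_j}$; along axis $i$ one needs $\max(1,\,q^{\tau_j-\tau_i})$ such cubes, so at most $q^{\sum_{i:\tau_j>\tau_i}(\tau_j-\tau_i)}$ cubes per rectangle. Summing the $s$-th powers of the diameters over the $(q+1)^n$ admissible $\bp$ and over $q\ge Q$ reduces to a single geometric sum that converges precisely when $s>s_j$. Letting $Q\to\infty$ gives $\mathcal{H}^s=0$ on the limsup set and hence $\dimh\le s_j$; minimising over $j$ yields the upper bound.

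For the lower bound, fix the index $j^\star$ attaining the minimum. Dirichlet's theorem shows that for every $\bx\in[0,1]^n$ there are infinitely many $q$ with $|x_i-p_i/q|<q^{-1-1/n}$ for every $i$, so the family $\{\bp/q\}$ forms a locally ubiquitous system with respect to Lebesgue measure at the cubic scale $q^{-1-1/n}$ (or any coarser scale one prefers). I would then apply a mass transference principle from hyperrectangles to hyperrectangles, of Wang--Wu type, in which each ubiquity cube at $\bp/q$ is shrunk to the weighted rectangle $R(q,\bp)$; a careful optimisation of the weights over the $n$ sides produces precisely $\dimh\ge s_{j^\star}$, matching the upper bound.

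The principal obstacle is the correct identification of the critical exponent in the mass transference step. The shrinking from ubiquity cube to rectangle is contracting in coordinates with $\tau_i>1/n$ and expanding in those with $\tau_i<1/n$, forcing one to pass to a coarser ubiquity scale (which remains ubiquitous trivially) or to split into cases before invoking the MTP, and then to verify the divergence hypothesis at the critical exponent $s_{j^\star}$ in every case. Equivalently, in a direct Cantor construction one would place a non-uniform mass distribution on the surviving rectangles at each level so that the local dimension equals $s_{j^\star}$ at every point of the Cantor limit set; the minimum over $j$ in the conclusion then appears as the natural optimisation over which coordinate direction pins down this local dimension, with $\tau_{j^\star}$ the exponent that balances the gain in the narrower coordinates against the cost in the wider ones.
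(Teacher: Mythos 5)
This is a cited result (Theorem~1 of \cite{R98}), not one the paper proves; there is therefore no in-paper argument to compare against. With that caveat, here is an assessment of your proposal on its own merits.

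Your upper-bound argument is complete and correct. Fixing $j$, covering each rectangle $R(q,\bp)$ by cubes of side $2q^{-1-\tau_j}$, counting $q^{\sum_{i:\tau_j>\tau_i}(\tau_j-\tau_i)}$ cubes per rectangle and $\asymp q^n$ rectangles per level, the resulting series $\sum_q q^{\,n+\sum_{i:\tau_j>\tau_i}(\tau_j-\tau_i)-(1+\tau_j)s}$ converges precisely for $s>s_j$, and the Hausdorff--Cantelli lemma then gives $\dimh\le s_j$ for each $j$, hence the upper bound $\min_j s_j$. This is the same covering argument used in the weighted limsup literature, and it is the exact mirror of the upper-bound covering used in the paper's own Theorem~\ref{main} (where rectangles are likewise covered by cubes of side $\beta(q_j)^{-\tau_k}$, with $k$ ranging over the coordinates).

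Your lower-bound route through ubiquity plus a rectangle-to-rectangle mass transference principle is a valid \emph{modern} proof, but it is not how the cited reference proceeds: Rynne's 1998 argument predates the mass transference machinery (Beresnevich--Velani 2006, Wang--Wu for rectangles 2021) and works instead through a direct Cantor construction with a non-uniform mass distribution, which is essentially the alternative you mention in your final sentence. Both are legitimate; the MTP route is shorter to state but pushes the hard work into the hypotheses of the Wang--Wu theorem (in particular into verifying which side is contracting and which expanding, and that the scaling exponents match $s_{j^\star}$), while the Cantor-construction route is longer but self-contained. Your sketch leaves this optimisation step unexecuted, and that is where essentially all of the difficulty of the theorem lies, so the lower bound should be regarded as an outline rather than a proof. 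If you want the MTP route to close cleanly, the natural citation is the Wang--Wu rectangle-to-rectangle mass transference principle, and you should check that its Diophantine hypothesis is exactly the convergence threshold $s>s_{j^\star}$ extracted in your upper bound.
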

For the definition of Hausdorff measure and dimension see \S~\ref{prelims}. This result is the weighted analogue of the classical Jarn\'ik-Besicovitch theorem in Diophantine approximation \cite{B34, J29}. \par 
In this article, we consider the reverse of the setup given above. That is, given an infinite sequence of positive integers $\{q_{j}\}_{j\in\N}$ and an $n$-tuple $\bt=(\tau_{1},\dots,\tau_{n})\in\R^{n}_{+}$ with $\tau_{1}+\dots+\tau_{n}\geq~1$ for how many $\bx \in [0,1]^{n}$ does it hold that
\begin{equation*}
     \| q_{j}x_{i}\|<q_{j}^{-\tau_{i}} \quad (1\leq i \leq n)\, ,
\end{equation*}
for all sufficiently large $j\in\N$. This set can trivially be seen to be a null set in terms of Lebesgue measure (even when $\tau_{1}+\dots +\tau_{n}=1$) by considering the measure of each individual layer and taking the limit as $j \to \infty$. For the Hausdorff dimension, a corollary of our main result (stated in subsection \ref{mainresults}) gives us the following. 

\begin{theorem} \label{interesting?}
Let $\{q_{j}\}_{j\in\N}$ be an infinite sequence of positive integers and $\bt=(\tau_{1},\dots, \tau_{n})\in~\R^{n}_{+}$ an $n$-tuple of positive real numbers. Suppose that
\begin{equation} \label{difference_interesting}
    \lim_{j\to \infty} \frac{\log q_{j}}{\log q_{j-1}}=k>1
\end{equation}
exists and $\max\limits_{1\leq i \leq n}\tau_{i}<k-1<\infty$. Then
\begin{equation*}
   \dimh \left\{ \bx\in [0,1]^{n}: \hspace{-0.2cm}\begin{array}{l} \|q_{j}x_{i}\|<q_{j}^{-\tau_{i}} \quad (1\leq i \leq n) \\ \text{ \rm for all sufficiently large } j \in \N \end{array} \right\} = \min_{1\leq j \leq n}\left\{\frac{n-\frac{1}{k-1}\sum\limits_{i=1}^{n}\tau_{i} +\sum\limits_{i:\tau_{j}>\tau_{i}}(\tau_{j}-\tau_{i})}{\tau_{j}+1}\right\} \, .
\end{equation*}
\end{theorem}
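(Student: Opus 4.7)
Write $\Lambda$ for the set in the theorem. It decomposes as the increasing union
\[
\Lambda \;=\; \bigcup_{N \geq 1} F_N, \qquad F_N := \bigcap_{j \geq N} A_j,
\]
where $A_j$ is the union of the axis-aligned hyperrectangles
$R_j(\bp) = \prod_{i=1}^n [p_i/q_j - q_j^{-1-\tau_i},\, p_i/q_j + q_j^{-1-\tau_i}]$
taken over $\bp \in \{0,1,\dots,q_j\}^n$. By countable stability of Hausdorff dimension, $\dimh \Lambda = \sup_N \dimh F_N$, so it suffices to compute $\dimh F_N$. My plan is the standard two-sided one for Cantor-type limit sets: use the $R_j$ from $A_j$ as natural covers for the upper bound, and build a Cantor subset carrying a measure with controllable local dimension for the lower bound.

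\textbf{Upper bound.} I would count nested chains $R_N(\bp^{(N)}) \cap \cdots \cap R_M(\bp^{(M)}) \neq \varnothing$. The number of level-$j$ rectangles meeting a given $R_{j-1}$ is $\asymp q_j^n q_{j-1}^{-n-\sigma}$ with $\sigma := \tau_1 + \cdots + \tau_n$, and a telescoping calculation using the growth condition \eqref{difference_interesting} gives the total chain count at level $M$ as $q_M^{\,n - \sigma/(k-1) + o(1)}$. Each level-$M$ rectangle can then be covered by $\asymp q_M^{\sum_{i:\tau_i < \tau_\ell}(\tau_\ell - \tau_i)}$ cubes of side $q_M^{-1-\tau_\ell}$ for any choice of direction $\ell \in \{1,\dots,n\}$. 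Feeding this cover into the definition of the Hausdorff $s$-content and letting $M \to \infty$ forces $\dimh F_N \leq s_\ell$, where $s_\ell$ is the expression inside the minimum in the theorem; taking the infimum over $\ell$ closes the upper bound.

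\textbf{Lower bound.} I would construct a Cantor-like subset $K \subseteq F_N$ by selecting, inside each previously chosen level-$(j-1)$ rectangle, a well-separated family of level-$j$ rectangles from $A_j$ of cardinality $\asymp q_j^n q_{j-1}^{-n-\sigma}$. The hypothesis $\max_i \tau_i < k-1$ (which makes $q_j q_{j-1}^{-1-\tau_i} \to \infty$ in every direction), combined with an equidistribution argument for $q_j$-rational points, ensures that such a family exists and has the claimed size. Equip $K$ with the natural probability measure $\mu$ giving equal mass to siblings having a common parent. By the mass distribution principle, the lower bound $\dimh F_N \geq \min_\ell s_\ell$ then follows from the pointwise estimate $\mu(B(\bx, r)) \ll r^s$ for every $s < \min_\ell s_\ell$ and every sufficiently small $r$.

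\textbf{Main obstacle.} The hardest step is precisely this pointwise measure estimate. Because the sidelengths $q_j^{-1-\tau_i}$ scale differently in each direction, for a given radius $r$ one must first identify the unique level $j$ for which $q_j^{-1-\tau_\ell} \leq r < q_{j-1}^{-1-\tau_\ell}$ with respect to the critical direction $\ell$, and then count coordinate-by-coordinate how many of the selected level-$j$ (and level-$(j-1)$) rectangles can meet $B(\bx, r)$. This produces a piecewise estimate whose worst regime is exactly the exponent $s_\ell$; the index $\ell^\ast$ realising $\min_\ell s_\ell$ is the direction whose contribution dominates at the relevant scale, and the $\min$ in the theorem arises from this coordinate-wise optimisation.
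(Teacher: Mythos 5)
Your proposal is correct in outline, but it takes a different route from the paper. The paper does not prove Theorem~\ref{interesting?} directly; it proves a general theorem (Theorem~\ref{main}, for abstract rationals in a product of Ahlfors-regular metric spaces) together with its shifted-sequence variant Theorem~\ref{corollary}, specialises to the real weighted inhomogeneous case to get Theorem~\ref{real_corollary2}, and then the entire proof of Theorem~\ref{interesting?} consists of a short computation showing that the hypothesis $\lim_{j}\log q_{j}/\log q_{j-1}=k$ forces $\alpha_{S}=\lim_{j}\bigl(\sum_{i<j}\log q_{i}\bigr)/\log q_{j}=1/(k-1)$, after which the formula drops out of Theorem~\ref{real_corollary2} with $h_{S}=k$. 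You instead re-derive the result from scratch in the concrete setting $[0,1]^{n}$ with integer rationals. Your decomposition $\Lambda=\bigcup_{N}F_{N}$ plays exactly the role of the paper's $\widehat{\Lambda}^{S}_{\mathcal{Q}}=\bigcup_{t}\Lambda^{\sigma^{t}S}_{\mathcal{Q}}$ together with countable stability, and your two-sided argument (chain-counting to cover with anisotropic rectangles, then covering each rectangle by $q_{M}^{\sum_{i:\tau_{i}<\tau_{\ell}}(\tau_{\ell}-\tau_{i})}$ cubes of side $q_{M}^{-1-\tau_{\ell}}$ for the upper bound; Cantor subset with uniformly distributed mass plus the mass distribution principle for the lower bound) is structurally identical to the paper's proof of Theorem~\ref{main}. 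The ``telescoping calculation'' you gesture at in the upper bound is where the identification $\alpha_{S}=1/(k-1)$ is implicitly happening; it is worth isolating this computation explicitly, since it is the only new content Theorem~\ref{interesting?} adds over the general result. Two minor points: (i) for the lattice $\tfrac{1}{q_{j}}\mathbb{Z}^{n}\cap[0,1]^{n}$ no equidistribution input is needed — the count of lattice points in a box is exact up to a bounded multiplicative error, and the paper handles this via the maximality/separation of abstract rationals (its Lemma~\ref{vol_arg}); (ii) your ``identify the unique level $j$'' framing for the local measure estimate is an oversimplification — the paper needs a genuine three-way case split on where $r$ falls relative to the \emph{set} of sidelengths $\{q_{k-1}^{-1-\tau_{i}}\}_{i}$, with the intermediate case further optimised over a pair of coordinate directions; you flag this as the main obstacle, which is the honest assessment, but it is the step that would require the most care if you wrote the proof out in full. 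Overall your approach is sound and would work; the paper's route buys generality (it yields the $p$-adic, complex, and missing-digit applications for free), while yours is more self-contained for this one statement.
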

\begin{remark}
    As an idea of the sorts of sequences $S$ one can choose satisfying \eqref{difference_interesting} note that for any $(a,b)\in\N_{>1}^{2}$ the sequence $\{a^{b^{c}}\}_{c\in\N}$ has
    \begin{equation*}
        \lim_{j\to \infty} \frac{\log q_{j}}{\log q_{j-1}}=b\, .
    \end{equation*}
    Let
    \begin{equation*}
        \cF=\left\{ \{a^{b^{c}}\}_{c \in \N} : (a,b)\in\N_{>1}^{2} \right\}\, . 
    \end{equation*}
    Then Theorem~\ref{interesting?} and the countable stability of the Hausdorff dimension (see Section~\ref{prelims} for more details) gives us that for any $n$-tuple $\bt$ of finite positive real numbers 
    \begin{equation*}
        \dimh \left\{ \bx\in [0,1]^{n}: \, \exists\,  S \in \cF \text{ s.t. } \hspace{-0.2cm}\begin{array}{l} \|q_{j}x_{i}\|<q_{j}^{-\tau_{i}} \quad (1\leq i \leq n) \\ \text{ \rm for all sufficiently large } j \in \N \end{array} \right\} = \min_{1\leq j \leq n}\left\{\frac{n +\sum\limits_{i:\tau_{j}>\tau_{i}}(\tau_{j}-\tau_{i})}{\tau_{j}+1}\right\} \, .
    \end{equation*}
\end{remark}
 \begin{remark}\rm 
 Compared to the Theorem of Rynne, note that the set defined in Theorem~\ref{interesting?} is of strictly smaller dimension, even if the sequence $\{q_{j}\}_{j\in\N}$ is taken to be increasingly sparse. In fact, one can deduce from \cite{R98} that for $n$-tuple $\bt$ with $\tau_{1}+\dots +\tau_{n}>0$ and sequence $S$ satisfying \eqref{difference_interesting} then
 \begin{equation*}
     \dimh \left\{ \bx \in [0,1]^{n}: \begin{array}{c}
     \|q_{j}x_{i}\|<q_{j}^{-\tau_{i}} \quad (1\leq i \leq n)\\
     \text{ for infinitely many } j\in \N \end{array} \right\} = \min_{1\leq j \leq n}\left\{\frac{n +\sum\limits_{i:\tau_{j}>\tau_{i}}(\tau_{j}-\tau_{i})}{\tau_{j}+1}\right\} \, .
 \end{equation*}
 This follows from \cite[Theorem 1]{R98} and the observation that the value $v(Q)$ ($v(S)$ in our case) appearing in \cite[Theorem 1]{R98} is equal to zero for the sequences we are considering.
 \end{remark}
 \begin{remark}\rm Theorem \ref{interesting?} is an extension of the recent work of the first-named author and Shi who proved the result in the non-weighted setting \cite{HussainShi23}. Theorem~\ref{interesting?} is a corollary of our main result which we state in the next section. In particular, we do not require the limit \eqref{difference_interesting} to exist, we only need to calculate the $\liminf$ of the sequence. See Theorem~\ref{main} and Theorem~\ref{real_corollary} for more details.
 \end{remark}
 \par 
 In order to state our main theorem we recall and expand upon the definition of \textit{abstract rationals} as introduced in \cite{FKR23}. We then recall the notation of \cite{HussainShi23} on the exponential shrinking problem in the non-weighted case and redefine these sets for abstract rationals. Our main results are then given, which are followed by a series of applications. These include the real weighted inhomogeneous approximation, $p$-adic weighted approximation, complex Diophantine approximation, and approximation on missing digit sets. In \S 3-5 we prove our main results.

\subsection{Abstract rationals}
Fix $n\in\N$ and for each $1\leq i \leq n$, let $(F_{i},d_{i},\mu_{i})$ be a non-empty totally bounded metric space equipped with a $\delta_{i}$-Ahlfors regular measure $\mu_{i}$ with support $F_{i}$. That is, for any $x_{i}\in F_{i}$ and $0<r<r_{0}$ for some bounded $r_{0}$ there exists constants $0<c_{1,i}\leq c_{2,i}< \infty$ such that
\begin{equation*}
    c_{1,i} r^{\delta_{i}} \leq \mu_{i}(B_{i}(x_{i},r)) \leq c_{2,i} r^{\delta_{i}},
\end{equation*} 
where for any $x_{i}\in F_{i}$ and $r>0$ we write
\begin{equation*}
    B_{i}(x_{i},r)=\{y\in F_{i}: d_{i}(x_{i},y)<r\}\, .
\end{equation*}
Let
\begin{equation*}
    F=\prod_{i=1}^{n}F_{i}, \quad d(\cdot,\cdot)=\max_{1\leq i\leq n} d_{i}(\cdot,\cdot), \quad \mu=\prod_{i=1}^{n} \mu_{i}
\end{equation*}
so that $(F,d,\mu)$ is the product metric space. For any two subsets $A,B\subset F_{i}$ by $d_{i}(A,B)$ we mean
\begin{equation*}
    d_{i}(A,B)=\inf\{d_{i}(a,b): a\in A, b\in B\},
\end{equation*}
and for any $\bx=(x_{1},\dots, x_{n}) \in F$ and $r>0$ we write
\begin{equation*}
    B((x_{1},\dots, x_{n}),r)=\prod_{i=1}^{n}B_{i}(x_{i},r).
\end{equation*}
Let $\cN$ be an infinite countable set, and let $\beta:\cN \to \R_{+}$ be function which associates a weight $\alpha \mapsto \beta(\alpha)=\beta_{\alpha}$. In line with \cite{FKR23} For each $1\leq i\leq n$ and each $q\in\cN$ define the \textit{$\beta$-abstract rationals of level $q$ in $F_{i}$} by fixing a \textit{maximal $\beta_{q}^{-1}$-separated} set of points $P_{i}(q) \subset F_{i}$, where
\begin{itemize}
    \item \textit{$\beta_{q}^{-1}$-separated} means that for all $p_{1},p_{2} \in P_{i}(q)$ we have that $d_{i}(p_{1},p_{2})\geq \beta_{q}^{-1}$, and
    \item \textit{maximal} means that for all $x \in F_{i}$ there exists $p \in P_{i}(q)$ such that $d_{i}(p,x)<\beta_{q}^{-1}$.
\end{itemize}
That is, all the points are reasonably separated from each other and we cannot fit any more level $q$ abstract rationals without being too close to an already present abstract rational.
Let $P(q)=\prod_{i=1}^{n}P_{i}(q)$ denote the product of abstract rationals of level $q$ in $F$, and let 
\begin{equation*}
    \cQ=\bigcup_{q\in\cN}P(q)
\end{equation*}
be the set of \textit{$\beta$-abstract rationals in $F$}.\par 
As an example consider 
\begin{align*}
    F_{i}=&\,[0,1]^{d}\, , \quad  d_{i}=|\cdot| \text{ the maximum norm}, \\
    \cN=&\,\N\, , \quad \quad  \beta(q)=q\, , \\
    P_{i}(q)&=\left\{\left(\frac{p_{1}}{q},\dots,\frac{p_{d}}{q}\right): 0\leq p_{1}, \dots, p_{d} \leq q\right\}=\frac{1}{q}\Z^{d} \cap [0,1]^{d} \, , \quad  (1\leq i \leq n)\, .
\end{align*}
Then $\cQ=\Q^{dn}\cap [0,1]^{dn}$. Generally one can take each $F_{i}$ to be any bounded convex body in $\R^{d}$, and the lattice $\frac{1}{q}\Z^{d}$ can be shifted by any vector.

\subsection{Liminf approximation sets}
In \cite{HussainShi23}, Hussain and Shi introduced an exponential shrinking problem stated as follows. Given a sequence $S=\{q_{j}\}_{j\in\N}$ of positive integers, fixed $\theta=(\theta_{1},\dots, \theta_{n}) \in [0,1]^{n}$, and $\tau \geq 1$, consider the set
\begin{equation*}
    \Lambda_{\Q^{n}}^{S}(\tau):=\left\{ \bx \in [0,1]^{n}: \max_{1\leq i\leq n}||q_{j}x_{i}-\theta_{i}||<q_{j}^{-\tau} \quad \text{ for all } j\geq 1 \right\}.
\end{equation*}
This set was introduced to answer a question related to a problem posed by Schleischitz in \cite{SchleischitzSelecta}. Under certain conditions on $\tau$ and $S$, they provide the exact Hausdorff dimension of the set $\Lambda_{\Q^{n}}^{S}(\tau)$. In this article, we generalise the above setting by considering weighted approximation (the approximation function can vary between coordinate axes) by abstract rationals (which includes approximation by rationals as a special case). \par 
Fix a sequence $S=\{q_{j}\}_{j\in\N}$ with each $q_{j}\in\cN$ and weight vector $\bt=(\tau_{1},\dots, \tau_{n})\in\R^{n}_{+}$. Define the set
\begin{equation*}
    \Lambda_{\cQ}^{S}(\bt):=\left\{ \bx\in F : \begin{array}{c} d_{i}(x_{i},P_{i}(q_{j}))<\beta(q_{j})^{-\tau_{i}} \quad (1\leq i \leq n) \\ \text{  for all } \, j \in \N \end{array} \right\},
\end{equation*}
and the set
\begin{equation*}
    \widehat{\Lambda}_{\cQ}^{S}(\bt):=\left\{ \bx\in F : \begin{array}{c}d_{i}(x_{i},P_{i}(q_{j}))<\beta(q_{j})^{-\tau_{i}} \quad (1\leq i \leq n)\\ \text{  for all sufficiently large } \, j \in \N \end{array} \right\}.
\end{equation*}

 The latter set is a slight relaxation of the former set since we only require the points to be eventually always close to a sequence of abstract rationals. We may write the latter set in terms of the former by defining for any $t\in\N$
\begin{equation*}
    \sigma^{t} S=\{q_{i+t}\}_{i\in \N},
\end{equation*}
that is, $\sigma$ is the left shift on sequence $S$. Then
\begin{equation*}
    \widehat{\Lambda}_{\cQ}^{S}(\bt) = \bigcup_{t\in\N} \Lambda_{\cQ}^{\sigma^{t} S}(\bt).
\end{equation*}

\subsection{Main results}\label{mainresults}
We prove the following results on the Hausdorff dimension of the sets introduced above. 

\begin{theorem} \label{main}
    Let $(F,d,\mu)$ be a product space of non-empty totally bounded metric spaces each equipped with Ahlfors regular measures. Let $\cN$ be an infinite countable set $\cN$, $\beta:\cN\to \R_{+}$, and $\cQ$ be a set of $\beta$-abstract rationals in $F$. Fix an infinite sequence $S$ contained in $\cN$ over which $\beta$ is unbounded and strictly increasing. Suppose that
    \begin{equation*}
        \inf_{j\in \N} \frac{\log \beta(q_{j})}{\log \beta(q_{j-1})}=h_{S}>1, \, \, \text{   and   } \,\, \liminf_{j\to \infty} \frac{\sum\limits_{i=1}^{j-1}\log \beta(q_{i})}{\log \beta(q_{j})}=\alpha_{S}.
    \end{equation*}
    For any $\bt$ such that $h_{S}>\tau_{i}>1$ for each $1\leq i \leq n$, we have that
    \begin{equation*}
        \dimh \Lambda_{\cQ}^{S}(\bt)=\min_{1\leq k \leq n}\left\{ \frac{1}{\tau_{k}}\left(\sum_{i=1}^{n}\delta_{i} -\alpha_{S}\sum_{i=1}^{n}(\tau_{i}-1)\delta_{i} + \sum_{j:\tau_{k}\geq \tau_{j}}(\tau_{k}-\tau_{j})\delta_{j} \right)\right\}.
    \end{equation*}
\end{theorem}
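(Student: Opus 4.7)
My plan is a standard two-sided attack: a Hausdorff content upper bound built from the natural cover by level-$j$ abstract rational rectangles, and a matching lower bound via a Cantor-type construction on which a uniform measure satisfies a Frostman-type estimate. Throughout, set $\ell_j := \log\beta(q_j)$, $T_j := \sum_{k=1}^{j}\ell_k$, and $c := \sum_{i=1}^{n}(\tau_i - 1)\delta_i > 0$ (which is positive since each $\tau_i > 1$).

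\textbf{Counting surviving rectangles.} The first step I would carry out is combinatorial: estimate the number $N_j$ of level-$j$ rectangles $\prod_i B_i(p_{i,j}, \beta(q_j)^{-\tau_i})$ that meet the constraint at every earlier level. Since $\tau_i > 1$, the level-$(j-1)$ rectangles are pairwise disjoint, and Ahlfors regularity together with $h_S > \tau_i$ (which guarantees $\beta(q_j)^{-1} \ll \beta(q_{j-1})^{-\tau_i}$) shows that each level-$(j-1)$ rectangle contains $\asymp \prod_i \beta(q_j)^{\delta_i}\beta(q_{j-1})^{-\tau_i\delta_i}$ valid level-$j$ centres. Iterating yields $\log N_j \asymp \sum_i\delta_i \ell_j - c\, T_{j-1}$.

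\textbf{Upper bound.} For each $k\in\{1,\dots,n\}$, subdivide each surviving level-$j$ rectangle into balls of radius $\beta(q_j)^{-\tau_k}$; the requisite count per rectangle is $\prod_{i:\tau_i\leq\tau_k}\beta(q_j)^{(\tau_k-\tau_i)\delta_i}$. Writing $M_{j,k}$ for the resulting total cover size, one has $\mathcal{H}^s(\Lambda_\cQ^S(\bt)) \leq \liminf_j M_{j,k}\beta(q_j)^{-s\tau_k}$. Substituting the formula for $\log N_j$ and selecting a subsequence along which $T_{j-1}/\ell_j$ converges, one finds the content tends to zero exactly when
\[
s > \frac{1}{\tau_k}\Big(\sum_i\delta_i - \alpha_S \sum_i(\tau_i - 1)\delta_i + \sum_{i:\tau_k\geq\tau_i}(\tau_k-\tau_i)\delta_i\Big),
\]
with $\alpha_S$ the liminf from the hypothesis. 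Taking the minimum over $k$ furnishes the desired upper bound.

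\textbf{Lower bound.} I would build a Cantor subset $K\subset\Lambda_\cQ^S(\bt)$ retaining a uniform sub-family of the $N_j$ surviving level-$j$ rectangles at every stage, place on $K$ the probability measure $\mu$ giving each level-$j$ rectangle mass $1/N_j$, and invoke the mass distribution principle. The Frostman estimate $\mu(B(x,r))\leq C r^s$ must be verified across all scales; the relevant ones are the rectangle sides $r = \beta(q_j)^{-\tau_k}$ (giving rise to the $n$ expressions appearing in the $\min$), plus the separation scale $\beta(q_j)^{-1}$ and the intermediate scales sitting between two consecutive levels.

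\textbf{Where the work concentrates.} The genuinely delicate step is the Frostman verification across the many regimes of $r$, in particular those straddling two levels or lying between two coordinate weights $\tau_i$. In each regime one must count how many surviving level-$j$ rectangles meet the ball in each direction and match this to $r^s$; these estimates are what produce the precise dependence on $\alpha_S$ and $\delta_i$, and show that the bottleneck scale among $\{\beta(q_j)^{-\tau_k}\}_{k=1}^{n}$ is exactly the $k$ realising the minimum. A secondary subtlety is the appearance of $\alpha_S$ as a liminf rather than a limsup: this reflects the need, on the covering side, to exploit only those levels $j$ at which $T_{j-1}/\ell_j$ is small (so that $N_j$ is large and the cover is inexpensive in the $s$-content), while on the Frostman side one thins the Cantor construction through the same subsequence to keep the mass distributed as evenly as possible.
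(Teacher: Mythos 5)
Your overall strategy matches the paper's exactly: both prove the upper bound by covering $\Lambda_{\cQ}^S(\bt)$ at level $j$ with balls of radius $\beta(q_j)^{-\tau_k}$ for each $k$, and both prove the lower bound via a nested Cantor set carrying a uniformly-split measure together with the mass distribution principle. Your count $\log N_j \asymp \sum_i \delta_i\,\ell_j - cT_{j-1}$ and your cover size $M_{j,k}$ are also what the paper derives, and you correctly isolate the hard part (the Frostman estimate across the regimes of $r$ relative to the weights $\tau_i$ and the two consecutive scales), which the paper handles by a three-case analysis.

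The one point that is off is your closing remark that "on the Frostman side one thins the Cantor construction through the same subsequence." This does not work, and it is also not what the paper does. The Cantor subset must retain rectangles from \emph{every} level $j$, since the defining condition of $\Lambda_{\cQ}^S(\bt)$ is a for-all condition, and thinning the family of children at any stage only \emph{increases} the mass on each surviving rectangle, so it makes the Frostman inequality harder, not easier. Moreover, the mass distribution principle needs $\nu(B(x,r)) \ll r^s$ for \emph{all} sufficiently small $r$, not just $r$ aligned with a favourable subsequence, so one cannot localise the Frostman estimate to the good levels. What the paper actually does is run the full Cantor construction and then appeal to the asymptotics of $T_{j-1}/\ell_j$; note in passing that its H\"older-exponent computation tacitly uses an \emph{upper} bound on $T_{j-1}/\ell_j$ for all large $j$, whereas the stated hypothesis controls only the $\liminf$, so your instinct that there is a subtlety lurking here is well founded, but the remedy is not to thin the Cantor set.
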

\begin{remark}\rm
The infimum condition of Theorem~\ref{main} cannot be replaced by a $\liminf$ condition. To see this consider for example $F=[0,1]$, $\beta(q)=q$, $$\cQ=\left\{\frac{p+\theta}{q}: (p,q)\in\Z\times \N \text{ and } \frac{p+\theta}{q} \in [0,1]\right\},$$ and the sequence $$S=\{2,3,3^{h},3^{h^{2}}, \dots \}.$$ For large $h$ and suitable choice of $\theta\in [0,1]$ it can be shown that the sets of points
\begin{align*}
    &\{x\in[0,1]:\|2x+\theta\|<2^{-h+1}\}  \text{  and  } \\
    &\{x\in[0,1]:\|3x+\theta\|<3^{-h+1}\} 
\end{align*}
are disjoint and so $\Lambda_{\cQ}^{S}(\bt)=\emptyset$.
\end{remark}
\begin{remark}\rm
In order to state our result for the general case of abstract rationals, it is necessary to suppose each $\tau_{i}<h_{S}$. Without this assumption additional considerations are necessary. See \cite[Remark 1.2]{HussainShi23} for a discussion on complications in the case of real simultaneous approximation (non-weighted).
\end{remark}

 The infimum condition can be replaced by a $\liminf$ condition if we consider the set $\widehat{\Lambda}_{\cQ}^{S}(\bt)$.

\begin{theorem} \label{corollary}
 Let $(F,d,\mu)$, $\cN$, $\beta$, and $\cQ$ be constructed as above. Fix an infinite sequence $S$ contained in $\cN$ over which $\beta$ is unbounded and strictly increasing and suppose that
    \begin{equation*}
        \liminf_{j\to \infty} \frac{\log \beta(q_{j})}{\log \beta(q_{j-1})}=h>1, \, \, \text{   and   } \,\, \liminf_{j\to \infty} \frac{\sum\limits_{i=1}^{j-1}\log \beta(q_{i})}{\log \beta(q_{j})}=\alpha_{S}.
    \end{equation*}
    For any $\bt$ such that $h>\tau_{i}>1$ for each $1\leq i \leq n$, we have that
    \begin{equation*}
        \dimh \widehat{\Lambda}_{\cQ}^{S}(\bt)=\min_{1\leq k \leq n}\left\{ \frac{1}{\tau_{k}}\left(\sum_{i=1}^{n}\delta_{i} -\alpha_{S}\sum_{i=1}^{n}(\tau_{i}-1)\delta_{i} + \sum_{j:\tau_{k}\geq \tau_{j}}(\tau_{k}-\tau_{j})\delta_{j} \right)\right\}.
    \end{equation*}
\end{theorem}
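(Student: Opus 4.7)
The plan is to reduce Theorem~\ref{corollary} to Theorem~\ref{main} via the decomposition
\[
\widehat{\Lambda}_{\cQ}^{S}(\bt) = \bigcup_{t \in \N} \Lambda_{\cQ}^{\sigma^{t} S}(\bt)
\]
already recorded in the text, combined with the countable stability of the Hausdorff dimension. Since the sets $\Lambda_{\cQ}^{\sigma^{t}S}(\bt)$ are nested and increasing in $t$ (any $\bx$ satisfying the approximation condition at every term of $\sigma^{t}S$ satisfies it at every term of $\sigma^{t+1}S$, which is a sub-tail), one obtains
\[
\dimh \widehat{\Lambda}_{\cQ}^{S}(\bt) = \sup_{t \in \N} \dimh \Lambda_{\cQ}^{\sigma^{t}S}(\bt) = \lim_{t \to \infty} \dimh \Lambda_{\cQ}^{\sigma^{t}S}(\bt),
\]
so it suffices to show that Theorem~\ref{main} applies to a sufficiently deep tail of $S$ and returns the claimed value.

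The next step is to verify that the stronger hypotheses of Theorem~\ref{main} are inherited by such a tail. I would track two parameters. First, the infimum
\[
h_{\sigma^{t}S} := \inf_{j} \frac{\log \beta(q_{j+t})}{\log \beta(q_{j-1+t})}
\]
is non-decreasing in $t$ (the infimum is taken over a smaller collection as $t$ grows) and its limit as $t \to \infty$ equals $\liminf_{j \to \infty} \frac{\log \beta(q_{j})}{\log \beta(q_{j-1})} = h$. Since the assumption $h > \tau_{i} > 1$ is strict for every $i$, there exists $T \in \N$ such that $h_{\sigma^{t}S} > \max_{i}\tau_{i} > 1$ for every $t \geq T$, and the infimum hypothesis of Theorem~\ref{main} applies to $\sigma^{t}S$. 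Secondly, $\alpha_{S}$ is preserved under shifts: the identity
\[
\frac{\sum\limits_{i=1}^{j-1}\log\beta(q_{i+t})}{\log \beta(q_{j+t})} = \frac{\sum\limits_{k=1}^{j+t-1}\log\beta(q_{k})}{\log \beta(q_{j+t})} - \frac{\sum\limits_{k=1}^{t}\log\beta(q_{k})}{\log \beta(q_{j+t})},
\]
together with $\beta(q_{j+t}) \to \infty$ as $j \to \infty$ (which kills the finite second term), gives $\alpha_{\sigma^{t}S} = \alpha_{S}$ for every $t$.

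Putting these together, Theorem~\ref{main} applied to $\sigma^{t}S$ for each $t \geq T$ will deliver
\[
\dimh \Lambda_{\cQ}^{\sigma^{t}S}(\bt) = \min_{1 \leq k \leq n}\left\{\frac{1}{\tau_{k}}\left(\sum_{i=1}^{n}\delta_{i} - \alpha_{S}\sum_{i=1}^{n}(\tau_{i}-1)\delta_{i} + \sum_{j:\tau_{k}\geq \tau_{j}}(\tau_{k}-\tau_{j})\delta_{j}\right)\right\},
\]
a value independent of $t$. Taking the supremum over $t$ then yields the stated dimension formula for $\widehat{\Lambda}_{\cQ}^{S}(\bt)$. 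There is no substantive obstacle here beyond bookkeeping; the point that must be emphasised is that the infimum condition of Theorem~\ref{main} is strictly stronger than a $\liminf$ condition, so a direct appeal is not available, and passing to a tail is essential. The invariance $\alpha_{\sigma^{t}S} = \alpha_{S}$ is precisely what ensures that the dimension obtained in the limit matches the formula in the statement.
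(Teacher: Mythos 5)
Your proposal is correct and follows essentially the same route as the paper: decompose $\widehat{\Lambda}_{\cQ}^{S}(\bt)$ via the left shift, invoke countable stability, and pass to a sufficiently deep tail so that the infimum hypothesis of Theorem~\ref{main} holds. You are slightly more careful than the paper in explicitly verifying the shift-invariance $\alpha_{\sigma^{t}S}=\alpha_{S}$, a step the paper uses implicitly.
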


It should be noted that Theorem~\ref{corollary} essentially follows from Theorem~\ref{main} and the countable stability of the Hausdorff dimension. For completeness, we provide the proof at the end of \S \ref{Section:CorollaryProof}.

\medskip

\noindent{\bf Acknowledgments:} The research of both authors is supported by the Australian Research Council discovery project 200100994.

%
%

\section{Applications}
 We begin with the classical setting of real approximation by rational numbers, which we generalise to the weighted inhomogeneous setting. We then give similar statements in the case of $p$-adic approximation. In later applications, we give the statement in the simplified one-dimensional homogeneous setting. It should be clear from the application in the real weighted inhomogeneous setting that the one-dimensional case readily generalises to the higher dimensional weighted setting. Indeed, the only calculations required to apply Theorems~\ref{main}--\ref{corollary} is to show our setup aligns with some approximation by abstract rationals. Once this is done in one dimension it is clear that the product space also satisfies the criteria of abstract rationals. \par
 The notion of abstract rationals admits a broad range of applications, though in some instances it is not immediately clear whether a set satisfies the properties of being abstract rationals. The main work in each of these applications is constructing the sets of abstract rationals. The list of applications presented below is far from exhaustive. For example, in increasing levels of difficulty, one could consider formal power series approximation, approximation by irrational rotations, and approximation of real manifolds by rational numbers. The latter two cases seem particularly challenging. \par 

%
%

\subsection{Real approximation} 
The classical study of approximation of real numbers by rationals is extensive, see \cite{BRV16} for a survey of the foundational results and \cite{HY14,KW23,KTV06,R98, WW19} for the more recent weighted analogies of such results. In this section let 
\begin{align*}
    F_{i}=[0,1]\, , \quad d_{i}=&|\cdot|\, , \quad \mu_{i}=\lambda\, , \\
    \text{ so } \, (F,d,\mu)=&([0,1]^{n},|\cdot|,\lambda_{n})\, , 
\end{align*}
for $|\cdot|$ the usual max norm on real space, $\lambda$ the Lebesgue measure, and $\lambda_{n}$ the $n$-dimensional Lebesgue measure. Let
\begin{align*}
    \cN=&\N\, , \quad  \quad  \beta(q)=q\, , \quad \quad   \theta:\cN \to [0,1]^{n}\, , \text{ and } \\
    P_{i}(q)=&\left\{ \frac{p+\theta_{i}(q)}{q}: p \in\Z \, \, \,   \text{ and }  \,  \,  \frac{p+\theta_{i}(q)}{q}\in [0,1] \right\} \, , \, \, \,  ( 1 \leq i \leq n )
\end{align*}
be the  $q^{-1}$-abstract rationals of level $q$ in $[0,1]$. \par 
Observe that each $P_{i}(q)$ can be seen as a subset of the shifted lattice $\tfrac{1}{q}\Z + \theta_{i}(q)$, thus it is clear each point is $q^{-1}$ separated, and furthermore the set is maximal. Note we only have to show that each $P_{i}(q)$ is a well defined set of $q^{-1}$-abstract rationals of level $q$ in $[0,1]$. The higher dimensional product space result follows immediately. Hence
\begin{equation*}
    \cQ= \bigcup_{q\in\N} \prod_{i=1}^{n}\left\{ \frac{p+\theta_{i}(q)}{q}: p \in\Z \, \, \,   \text{ and }  \,  \,  \frac{p+\theta_{i}(q)}{q}\in [0,1] \right\}
\end{equation*}
is a well-defined set of abstract rationals.  \par 
For $\theta(q)=0$ for all $q\in \N$ this is the standard homogeneous setting, and for $\theta(q)=(\theta_{1},\dots, \theta_{n})$ fixed this is the standard inhomogeneous setting. \par 
Let $S=\{q_{i}\}_{i\in\N}$ be an increasing sequence of positive integers and define the sets
\begin{align*}
    W_{n}^{S}(\bt)=\left\{ \bx \in [0,1]^{n} : \begin{array}{c}\left\| q_{j}x_{i}-\theta(q_{j})_{i}\right\|<q_{j}^{-\tau_{i}}\, \quad (1\leq i \leq n) \\
    \quad \text{ for all } j \in \N \end{array} \right\}\, , \\
     \widehat{W}_{n}^{S}(\bt)=\left\{ \bx \in [0,1]^{n} : \begin{array}{c}\left\| q_{j}x_{i}-\theta(q_{j})_{i}\right\|<q_{j}^{-\tau_{i}}\, \quad (1\leq i \leq n) \\
    \quad \text{ for all sufficiently large } j \in \N \end{array} \right\}\, .
\end{align*}
Note that dividing through by $q_{j}$ in the inequalities in $W_{n}^{S}(\bt)$ gives us $W_{n}^{S}(\bt)=\Lambda_{\cQ}^{S}(\bt+1)$, and similarly $\widehat{W}_{n}^{S}(\bt)=\widehat{\Lambda}_{\cQ}^{S}(\bt+1)$. Notice by choice $\beta(q)=q$ for any strictly increasing sequence of positive integers $S$ we immediately have that $\beta$ is unbounded and strictly increasing. Applying Theorem~\ref{main} to this setting we immediately have the following.
\begin{theorem} \label{real_corollary}
    Let $S$ be an increasing sequence of integer with
    \begin{equation*}
        \inf_{j\to \infty} \frac{\log q_{j}}{\log q_{j-1}}=h_{S}>1, \, \, \text{   and   } \,\, \liminf_{j\to \infty} \frac{\sum\limits_{i=1}^{j-1}\log q_{i}}{\log q_{j}}=\alpha_{S}.
    \end{equation*}
    For any $\bt$ such that $h_{S}-1>\tau_{i}>0$ for each $1\leq i \leq n$, we have that
    \begin{equation*}
        \dimh W_{n}^{S}(\bt)=\min_{1\leq k \leq n} \left\{\frac{1}{\tau_{k}+1}\left(n-\alpha_{S}\sum_{i=1}^{n}\tau_{i} + \sum_{i:\tau_{k}\geq \tau_{i}} (\tau_{k}-\tau_{i}) \right)\right\}.
    \end{equation*}
\end{theorem}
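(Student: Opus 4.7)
The proof of Theorem~\ref{real_corollary} is essentially a direct translation from the abstract rational framework to the classical real setting, so my plan is to verify that the hypotheses of Theorem~\ref{main} are satisfied in this setup and then perform the necessary substitution of parameters.

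First, I would confirm that the data $(F_i,d_i,\mu_i)=([0,1],|\cdot|,\lambda)$ constitutes a totally bounded metric space equipped with a $1$-Ahlfors regular measure; that is, $\delta_i=1$ for each $1\le i\le n$. Next, I would check that the sets $P_i(q)=\{(p+\theta_i(q))/q : p\in\Z,\ (p+\theta_i(q))/q\in[0,1]\}$ are $q^{-1}$-separated (trivial from the lattice structure) and maximal (any $x\in[0,1]$ lies within distance $q^{-1}$ of the shifted lattice $\tfrac{1}{q}\Z+\theta_i(q)/q$ intersected with $[0,1]$). Hence, with $\cN=\N$ and $\beta(q)=q$, the collection $\cQ$ is a legitimate set of $\beta$-abstract rationals in $F=[0,1]^n$, and for any strictly increasing integer sequence $S$, $\beta$ is unbounded and strictly increasing along $S$.

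The crucial observation, already flagged in the excerpt, is the scaling identity
\[
\bigl|x_i-\tfrac{p+\theta_i(q_j)}{q_j}\bigr|<q_j^{-\tau_i-1}
\quad\Longleftrightarrow\quad
\|q_jx_i-\theta_i(q_j)\|<q_j^{-\tau_i},
\]
so that $W_n^S(\bt)=\Lambda_\cQ^S(\bt+\mathbf{1})$, where $\bt+\mathbf{1}$ denotes the weight vector with components $\tau_i+1$. Under the hypothesis $h_S-1>\tau_i>0$, the shifted weights satisfy $h_S>\tau_i+1>1$, exactly the range required by Theorem~\ref{main}. The growth conditions on $S$ in the statement translate verbatim into the conditions
\[
\inf_{j\in\N}\frac{\log\beta(q_j)}{\log\beta(q_{j-1})}=h_S>1,\qquad
\liminf_{j\to\infty}\frac{\sum_{i=1}^{j-1}\log\beta(q_i)}{\log\beta(q_j)}=\alpha_S,
\]
since $\beta(q)=q$.

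Finally, I would substitute $\delta_i=1$ and replace each $\tau_i$ in the formula of Theorem~\ref{main} by $\tau_i+1$. The generic term becomes
\[
\frac{1}{\tau_k+1}\Bigl(\sum_{i=1}^n 1-\alpha_S\sum_{i=1}^n\tau_i+\sum_{j:\tau_k\ge\tau_j}(\tau_k-\tau_j)\Bigr),
\]
which is precisely the right-hand side of the claimed dimension formula; taking the minimum over $k$ concludes. I do not anticipate any serious obstacle here, since all the analytic content is already packaged inside Theorem~\ref{main}; the only points requiring a line of verification are the Ahlfors regularity (with exponent $1$) and the maximal-separated property of the shifted lattice points $P_i(q)$, both of which are immediate from the lattice structure of $\tfrac{1}{q}\Z$.
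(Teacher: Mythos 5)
Your proposal is correct and follows the same route as the paper: verify that $([0,1],|\cdot|,\lambda)$ with the shifted lattices $P_i(q)$ gives a legitimate set of $\beta$-abstract rationals with $\delta_i=1$, observe the rescaling identity $W_n^S(\bt)=\Lambda_\cQ^S(\bt+\mathbf{1})$, check that $h_S-1>\tau_i>0$ becomes $h_S>\tau_i+1>1$, and substitute into the dimension formula of Theorem~\ref{main}. All the analytic work is indeed contained in Theorem~\ref{main}, and your parameter bookkeeping matches the paper's.
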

\begin{remark} \rm 
This result is a generalisation of \cite[Theorem 1.1]{HussainShi23} to the weighted setting, where it was proven that for $\tau=\tau_{1}=\dots=\tau_{n}$ with $h_{S}-1>\tau>0$ that
\begin{equation*}
    \dimh W^{S}_{n}(\tau)=\frac{n}{\tau+1}(1-\alpha_{S}\tau),
\end{equation*}
which agrees with the theorem above.
Unlike \cite{HussainShi23}, in our setting the inhomogeneity $\theta$ is allowed to vary over the sequence $S$.
\end{remark}
By applying Theorem~\ref{corollary} instead of Theorem~\ref{main} we have the following result.
\begin{theorem} \label{real_corollary2}
    Let $S$ be an increasing sequence of integer with
    \begin{equation*}
        \liminf_{j\to \infty} \frac{\log q_{j}}{\log q_{j-1}}=h_{S}>1, \, \, \text{   and   } \,\, \liminf_{j\to \infty} \frac{\sum\limits_{i=1}^{j-1}\log q_{i}}{\log q_{j}}=\alpha_{S}.
    \end{equation*}
    For any $\bt$ such that $h_{S}-1>\tau_{i}>0$ for each $1\leq i \leq n$, we have that
    \begin{equation*}
        \dimh W_{n}^{S}(\bt)=\min_{1\leq k \leq n} \left\{\frac{1}{\tau_{k}+1}\left(n-\alpha_{S}\sum_{i=1}^{n}\tau_{i} + \sum_{i:\tau_{k}\geq \tau_{i}} (\tau_{k}-\tau_{i}) \right)\right\}.
    \end{equation*}
\end{theorem}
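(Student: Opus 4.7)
The plan is to recognise Theorem~\ref{real_corollary2} as the special case of Theorem~\ref{corollary} obtained by specialising the abstract-rational framework to the real setting already laid out in this subsection, and then to carry out a rescaling of the weight vector. The whole argument is essentially a verification and bookkeeping exercise; there is no new analytic obstacle beyond what Theorem~\ref{corollary} already handles.

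First I would confirm that the data $(F_i, d_i, \mu_i) = ([0,1], |\cdot|, \lambda)$, together with $\cN = \N$, $\beta(q) = q$, and $P_i(q)$ the shifted $q$-rationals from the previous subsection, is a legitimate abstract-rational setup in the sense of Theorem~\ref{corollary}. The metric space $([0,1], |\cdot|, \lambda)$ is $1$-Ahlfors regular, so $\delta_i = 1$ for every $1 \leq i \leq n$. The verification that each $P_i(q)$ is a maximal $q^{-1}$-separated subset of $[0,1]$ was already made above. Since $S$ is an increasing sequence of positive integers and $\beta$ is the identity, $\beta$ is automatically unbounded and strictly increasing along $S$, and the two growth hypotheses $\liminf \log q_j / \log q_{j-1} = h_S$ and $\liminf \sum_{i=1}^{j-1} \log q_i / \log q_j = \alpha_S$ become exactly the hypotheses required by Theorem~\ref{corollary}.

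Next I would translate the defining inequality of $\widehat{W}_n^S(\bt)$ into the language of $\widehat{\Lambda}_\cQ^S$ by dividing through by $q_j$: the condition $|q_j x_i - \theta(q_j)_i| < q_j^{-\tau_i}$ becomes $d_i(x_i, P_i(q_j)) < q_j^{-(\tau_i + 1)} = \beta(q_j)^{-(\tau_i + 1)}$, producing the identity $\widehat{W}_n^S(\bt) = \widehat{\Lambda}_\cQ^S(\bt + 1)$ already noted in the excerpt. Under this reparametrisation the hypothesis $h_S - 1 > \tau_i > 0$ becomes exactly $h_S > \tau_i + 1 > 1$, matching the hypothesis of Theorem~\ref{corollary} for the weight vector $\bt + 1$.

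Finally I would substitute $\delta_i = 1$ and replace each occurrence of $\tau_i$ by $\tau_i + 1$ in the formula from Theorem~\ref{corollary}. The prefactor $1/\tau_k$ becomes $1/(\tau_k + 1)$, the sum $\sum_i \delta_i$ becomes $n$, the term $\alpha_S \sum_i (\tau_i - 1)\delta_i$ becomes $\alpha_S \sum_i \tau_i$, and the correction $\sum_{j:\tau_k \geq \tau_j}(\tau_k - \tau_j)\delta_j$ is unaffected by the shift because the differences $(\tau_k + 1) - (\tau_j + 1)$ and the condition $\tau_k + 1 \geq \tau_j + 1$ collapse to the original ones. Taking the minimum over $k$ then yields precisely the stated formula. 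The main obstacle, if any, is purely notational: the analytic work is entirely absorbed into Theorem~\ref{corollary}.
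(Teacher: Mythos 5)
Your proposal is correct and is exactly the route the paper takes: Theorem~\ref{real_corollary2} is stated immediately after the sentence ``By applying Theorem~\ref{corollary} instead of Theorem~\ref{main} we have the following result,'' and the identifications $\delta_i=1$, $\beta(q)=q$, and $\widehat{W}_n^S(\bt)=\widehat{\Lambda}_\cQ^S(\bt+1)$ are precisely the ones already recorded in that subsection. One remark: the theorem as printed writes $W_n^S(\bt)$ on the left-hand side, but since it is derived from Theorem~\ref{corollary} (which concerns $\widehat{\Lambda}_\cQ^S$) the set should be $\widehat{W}_n^S(\bt)$; your proof works with the hatted set, which is the intended reading.
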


\vspace{1ex}

In later applications we will only give a results aligning to setups of the form $\Lambda_{\cQ}^{S}(\bt)$. It should be clear that the statements relating to $\widehat{\Lambda}_{\cQ}^{S}(\bt)$ follow immediately.

%
%

\subsection{$p$-adic weighted approximation}
Fix a prime number $p$ and let
\begin{align*}
    F=\Zp\, , \quad d=&|\cdot|_{p}\, , \quad \mu=\mu_{p}\, , \\
\end{align*}
for $\Zp$ the ring of $p$-adic integers, $|\cdot|_{p}$ the $p$-adic norm, and $\mu_{p}$, the $p$-adic Haar measure normalised at $\mu_{p}(\Zp)=1$. For metric properties of the classical sets of Diophantine approximation in $p$-adic space see \cite{A95, J45,L55}, and for the more recent weighted setting see \cite{BLW21b,GHSW23, KTV06}. \par 
Due to the ultrametric properties of $p$-adic space, it is slightly more complicated to construct layers of abstract rationals. We opt for the following setup. Let 
\begin{equation*}
    \cN=\{p^{k}: k\in\N \}\, , \quad \text{ and } \quad P(q)=\left\{\frac{a}{q-1}: 1\leq a \leq q \right\}\, .
\end{equation*}
Note that for $\frac{a}{q-1},\frac{a'}{q-1}\in P(q)$ with $a\neq a'$ we have that
\begin{equation*}
    \left| \frac{a}{q-1}-\frac{a'}{q-1}\right|_{p}=|q-1|_{p}^{-1}|a-a'|_{p}= |a-a'|_{p} \geq p^{-k}=|q|^{-1},
\end{equation*}
and so $P(q)$ is $q^{-1}$-separated for each $q\in\cN$. Thus define $\beta(q)=|q|$. \par 
To show each $\beta$-abstract rationals of level $q$ is maximal observe that there are $p^{k}$ points contained in $P(p^{k})$ and each of these lies in $\Zp$ by the coprimality of $(p^{k}-1,p)$. The $p$-adic max norm balls 
\begin{equation*}
    \bigcup_{\frac{a}{p^{k}-1}\in P_{i}(p^{k})}B\left(\frac{a}{p^{k}-1}, p^{-k}\right)
\end{equation*}
are disjoint and furthermore are a cover of $\Zp$. To see this write each $x\in\Zp$ as their $p$-adic expansion
\begin{equation} \label{p-adic cover}
    x=\sum_{i=1}^{\infty}x_{i}p^{-i} \quad x_{i}\in \{0,1,\dots , p-1\}.
\end{equation}
Since each $\frac{a}{p^{k}-1} \in P(p^{k})$ is $p^{-k}$-separated, each of their corresponding $p$-adic expansions must differ over the first $k$ coefficients. There are $p^{k}$ different $p$-adic expansions over the first $k$ coefficients so the $p^{k}$ abstract rationals of level $p^{k}$ cover each possible expansion. Thus any $x\in \Zp$ belongs to some ball in the union \eqref{p-adic cover}, and so every $x\in\Zp$ is $p^{-k}$ close to an abstract rational of level $p^{k}$. \par
For each $q\in\cN$ let $\cQ_{p}=\bigcup_{q\in\cN}P(q) \subset \Zp$. For $\tau\in\R_{+}$ and sequence $S=\{p^{k_{j}}\}_{j\in\N}$ with $k_{j}\in\N$ an increasing sequence define
\begin{equation*}
    \mathcal{W}_{\Zp}^{S}(\tau)=\left\{ x \in \Zp: \left|x-\frac{a}{p^{k_{j}}-1}\right|_{p}<p^{-k_{j}\tau} \, \  \text{ for some } \frac{a}{p^{k_{j}}-1}\in P(p^{k_{j}}) \text{ and for all } j\in \N \right\}
\end{equation*}
Note that $\mathcal{W}_{\Zp}^{S}(\tau)=\Lambda_{\cQ_{p}}^{S}(\tau)$, and observe that since the sequence $S$ is strictly increasing $\beta(q)=|q|$ is unbounded and strictly increasing. Hence applying Theorem~\ref{main} we have the following.
\begin{theorem}
    Let $S=\{p^{k_{j}}\}_{j\in\N}$ be a sequence of positive integers with $k_{j}\in\N$ increasing and let 
    \begin{equation*}
        \inf_{j\in \N} \frac{k_{j}}{k_{j-1}}=h_{S}>1, \, \, \text{   and   } \,\, \liminf_{j\to \infty} \frac{\sum\limits_{i=1}^{j-1}k_{i}}{ k_{j}}=\alpha_{S}.
    \end{equation*}
     For any $\tau$ such that $h_{S}>\tau>1$ we have that
        \begin{equation*}
            \dimh \mathcal{W}_{\Zp}^{S}(\bt) = \frac{1}{\tau}\left(1-\alpha_{S}(\tau-1) \right)\, .
        \end{equation*}
\end{theorem}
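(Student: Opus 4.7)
The plan is to recognise this theorem as a direct specialisation of Theorem~\ref{main} with $n=1$ to the $p$-adic framework that has just been built. Most of the technical content (maximality and separation of $P(q)=\{a/(q-1):1\le a\le q\}$, and the covering property in $\Zp$) has been verified in the paragraphs preceding the statement. The only genuine tasks are to check the hypotheses of Theorem~\ref{main} in this specific setting and to simplify the resulting dimension formula.

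First I would verify that the measured metric space $(\Zp,|\cdot|_p,\mu_p)$ is $\delta$-Ahlfors regular with $\delta=1$. This is immediate: for any $x\in\Zp$ and any $k\in\N$, the ball $B(x,p^{-k})$ is the coset $x+p^k\Zp$, and the normalisation $\mu_p(\Zp)=1$ together with translation invariance gives $\mu_p(B(x,p^{-k}))=p^{-k}$. Hence the Ahlfors constants can be taken equal to $1$ on the scales $r=p^{-k}$, and standard interpolation handles intermediate radii. So $\delta_1=1$.

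Next I would translate the conditions on $S=\{p^{k_j}\}$ into the form required by Theorem~\ref{main}. With $\beta(q)=|q|$ and $q_j=p^{k_j}$ we have $\log\beta(q_j)=k_j\log p$, so the logarithm factors cancel uniformly throughout, yielding
\begin{equation*}
\inf_{j\in\N}\frac{\log\beta(q_j)}{\log\beta(q_{j-1})}=\inf_{j\in\N}\frac{k_j}{k_{j-1}}=h_S,\qquad
\liminf_{j\to\infty}\frac{\sum_{i=1}^{j-1}\log\beta(q_i)}{\log\beta(q_j)}=\liminf_{j\to\infty}\frac{\sum_{i=1}^{j-1}k_i}{k_j}=\alpha_S.
\end{equation*}
The strict monotonicity and unboundedness of $\beta$ along $S$ are immediate from the fact that $\{k_j\}$ is strictly increasing in $\N$, and the constraint $h_S>\tau>1$ is exactly the hypothesis assumed. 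Since $\mathcal{W}_{\Zp}^S(\tau)=\Lambda_{\cQ_p}^S(\tau)$ by definition, Theorem~\ref{main} applies.

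Finally I would specialise the dimension formula of Theorem~\ref{main} to $n=1$, $\delta_1=1$, and the single weight $\tau_1=\tau$. The outer minimum over $k$ is vacuous, and the diagonal term $\sum_{j:\tau_k\geq\tau_j}(\tau_k-\tau_j)\delta_j$ vanishes because the only index is $j=1=k$. What remains is $\tau^{-1}\bigl(1-\alpha_S(\tau-1)\bigr)$, which is exactly the asserted dimension. The only step requiring any thought is the Ahlfors-regularity check, and even that is essentially a one-line consequence of the Haar normalisation; there is no real obstacle beyond bookkeeping.
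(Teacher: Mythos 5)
Your proposal is correct and takes essentially the same route as the paper: verify that the $p$-adic setup (with $\beta(q)=|q|$, $\delta_1=1$, and the separation/maximality of $P(q)$ already established in the preceding paragraphs) satisfies the hypotheses of Theorem~\ref{main}, identify $\mathcal{W}_{\Zp}^S(\tau)=\Lambda_{\cQ_p}^S(\tau)$, and specialise the dimension formula to $n=1$. Your explicit Ahlfors-regularity check via Haar translation invariance and the cancellation of $\log p$ in the ratio conditions are exactly the bookkeeping the paper leaves implicit.
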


For completeness in this application, we also provide the weighted result. Redefine
\begin{align*}
    F&=\Zp^{n}\, ,\quad  \quad d=\max_{1\leq i \leq n} |\cdot |_{p}\, ,\quad \quad \mu=\mu_{p,n}=\prod_{i=1}^{n}\mu_{p}\, ,\\
    \cN&=\{p^{k}: k\in\N\} \, , \quad P(q)=\prod_{i=1}^{n}P_{i}(q)=\prod_{i=1}^{n}\left\{\frac{a_{i}}{q-1}:1\leq a\leq q\right\}\, ,\\
    \cQ_{p,n}&=\bigcup_{q\in\cN}P(q)\subset \Zp^{n}\, .
\end{align*}
By our above calculations in the one dimensional case each $P_{i}(q)$ are $\beta$-abstract rationals of level $q$ in $\Zp$ so $P(q)=\prod_{i=1}^{n}P_{i}(q)$ are $\beta$-abstract rationals of level $q$ in $\Zp^{n}$. For $n$-tuple $\bt\in\R^{n}_{+}$ and sequence $S=\{p^{k_{j}}\}_{j\in\N}$ with $k_{j}$ an increasing sequence let
\begin{equation*}
    \mathcal{W}_{\Zp^{n}}^{S}(\bt)=\left\{ \bx \in \Zp^{n}:\begin{array}{c} \left|x-\frac{a_{i}}{p^{k_{j}}-1}\right|_{p}<p^{-k_{j}\tau_{i}} \, \quad (1\leq i \leq n) \\  \text{ for some } \frac{\ba}{p^{k_{j}}-1}\in P(p^{k_{j}}) \text{ and for all } j\in \N \end{array} \right\}\,.
\end{equation*}
\begin{theorem}
    Let $S=\{p^{k_{j}}\}_{j\in\N}$ be a sequence of positive integers with $k_{j}\in\N$ increasing and let 
    \begin{equation*}
        \inf_{j\in \N} \frac{k_{j}}{k_{j-1}}=h_{S}>1, \, \, \text{   and   } \,\, \liminf_{j\to \infty} \frac{\sum\limits_{i=1}^{j-1}k_{i}}{ k_{j}}=\alpha_{S}.
    \end{equation*}
    For any $\bt$ such that $h_{S}>\tau_{i}+1>0$ for each $1\leq i \leq n$ we have that
        \begin{equation*}
            \dimh \mathcal{W}_{n}^{S}(\bt) = \min_{1\leq k \leq n} \left\{\frac{1}{\tau_{k}}\left(n-\alpha_{S}\sum_{i=1}^{n}(\tau_{i}-1) + \sum_{i:\tau_{k}\geq \tau_{i}} (\tau_{k}-\tau_{i}) \right)\right\}.
        \end{equation*}
\end{theorem}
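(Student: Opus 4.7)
The proof is a direct corollary of Theorem~\ref{main} applied to the $n$-fold product of $(\Zp, |\cdot|_p, \mu_p)$. My plan is to verify the hypotheses---Ahlfors regularity, abstract rationals structure, and growth conditions---and then substitute into the dimension formula.

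First, $(\Zp, |\cdot|_p, \mu_p)$ is totally bounded and $1$-Ahlfors regular: for $x\in\Zp$ and $0<r\leq 1$, the ball $B(x,r)$ coincides with $B(x,p^{-k})$ for the unique $k$ with $p^{-k}\leq r < p^{-k+1}$, so $\mu_p(B(x,r))=p^{-k}$ is comparable to $r$. Hence each factor $F_i=\Zp$ has $\delta_i=1$, and $(\Zp^n,\max_i|\cdot|_p,\mu_{p,n})$ is the required product metric space. The fact that $P_i(q)=\{a/(q-1):1\leq a\leq q\}$ is a maximal $\beta_q^{-1}$-separated subset of $\Zp$ with $\beta(q)=|q|$ was established in the one-dimensional discussion above, so by construction $P(q)=\prod_{i=1}^{n}P_i(q)$ is a valid family of $\beta$-abstract rationals of level $q$ in $\Zp^n$ and $\cQ_{p,n}$ is a genuine set of abstract rationals.

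Next, I translate the growth conditions. Since $\beta(p^{k_j})=p^{k_j}$, we have $\log\beta(q_j)=k_j\log p$, from which
\begin{equation*}
\inf_{j\in\N}\frac{\log\beta(q_j)}{\log\beta(q_{j-1})}=\inf_{j\in\N}\frac{k_j}{k_{j-1}}=h_S,\qquad \liminf_{j\to\infty}\frac{\sum_{i=1}^{j-1}\log\beta(q_i)}{\log\beta(q_j)}=\alpha_S,
\end{equation*}
and $\beta$ is strictly increasing and unbounded on $S$ because $(k_j)$ is. The defining inequality $|x_i - a_i/(p^{k_j}-1)|_p < p^{-k_j\tau_i}$ of $\mathcal{W}_{\Zp^n}^S(\bt)$ is precisely $|x_i-a_i/(p^{k_j}-1)|_p < \beta(q_j)^{-\tau_i}$, so $\mathcal{W}_{\Zp^n}^S(\bt)=\Lambda_{\cQ_{p,n}}^S(\bt)$ and Theorem~\ref{main} applies directly.

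Finally, substituting $\delta_i=1$ for every $i$ into the formula of Theorem~\ref{main} collapses $\sum_i\delta_i$ to $n$, $\sum_i(\tau_i-1)\delta_i$ to $\sum_i(\tau_i-1)$, and $\sum_{j:\tau_k\geq\tau_j}(\tau_k-\tau_j)\delta_j$ to $\sum_{i:\tau_k\geq\tau_i}(\tau_k-\tau_i)$, which yields the stated dimension. There is no genuine obstacle: all of the ultrametric subtleties---the shift by $q-1$ that is needed to ensure $a/(q-1)\in\Zp$, and the counting of $p$-adic expansions used to prove maximality---were already absorbed into the one-dimensional construction of $P_i(q)$ carried out above, and the passage to the $n$-dimensional weighted setting is purely formal because the abstract rationals framework of Theorem~\ref{main} is already phrased in terms of product spaces.
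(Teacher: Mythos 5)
Your proof is correct and takes essentially the same approach the paper intends: you verify that $\cQ_{p,n}$ is a valid set of $\beta$-abstract rationals on $\Zp^{n}$ with $\delta_{i}=1$, translate the growth hypotheses to $h_{S}$ and $\alpha_{S}$ via $\log\beta(q_{j})=k_{j}\log p$, identify $\mathcal{W}_{\Zp^{n}}^{S}(\bt)=\Lambda_{\cQ_{p,n}}^{S}(\bt)$ with no shift in $\bt$, and substitute into the formula of Theorem~\ref{main}. One thing to note: the hypothesis printed in the theorem statement reads $h_{S}>\tau_{i}+1>0$, whereas the direct application of Theorem~\ref{main} that you (and the paper) invoke requires $h_{S}>\tau_{i}>1$; since there is no $\bt\mapsto\bt+1$ shift in the $p$-adic set identity (in contrast to the real case $W_{n}^{S}(\bt)=\Lambda_{\cQ}^{S}(\bt+1)$), the stated condition appears to be a slip carried over from the real application, and the hypothesis you are implicitly using is the correct one for the argument.
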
 
\vspace{2ex}

\subsection{Complex Diophantine approximation} 
The classical setting of Diophantine approximation of complex numbers by Gaussian integers has been studied by a range of authors, see for example \cite{BGH,HeXiong2021,HussainNZJM} and \cite[Sections 4-6]{DodsonKristensen}. In the complex case, we consider the following setup. Let 
\begin{align*}
    F=\mathfrak{F}=[-\tfrac{1}{2},\tfrac{1}{2}]\times[-\tfrac{1}{2},\tfrac{1}{2}]i\, &, \quad d=\|\cdot\|_{2}\, , \quad \mu=\lambda_{2}\, ,
\end{align*}
for $\|\cdot\|_{2}$ the Euclidean norm. Note that $\delta=2$ in this setting. Let    
\begin{align*}
    \cN=\{a+bi: a,b\in\Z\}\, , \quad  &\beta(a+bi)=\|a+bi\|_{2}=a^{2}+b^{2}\, .
\end{align*}

Then, for any $q\in\cN$, the set of Gaussian integers, let 
$$P(q)=\left\{\frac{p}{q}: p \in \cN \text{ and  } \frac{p}{q} \in \mathfrak{F} \right\}\, . $$ This set can be associated to a lattice on $\R^{2}$ with Euclidean distance $\|\frac{p}{q}-\frac{p'}{q}\|_{2}\geq \|q\|_{2}^{-1}$, see \cite[Section 4.5]{DodsonKristensen}. Hence $P(q)$ is $\|q\|_{2}^{-1}$-separated and maximal, and so $\cQ_{\mathbb{C}}=\bigcup_{q\in\cN}P(q)$ is a well defined set of $\beta$-abstract rationals on $\mathcal{F}$. Let $S=\{q_{j}\}_{j\in \N}$ be a sequence of Gaussian integers with strictly increasing norm, and let $\tau \in \R_{+}$. Define the set
\begin{equation*}
    \mathfrak{W}_{\mathbb{C}}^{S}(\tau)=\left\{ z \in \mathfrak{F} : \left\|z-\frac{p}{q_{j}}\right\|_{2}< \|q_{j}\|_{2}^{-\tau-1} \text{ for some } \frac{p}{q_{j}}\in P(q_{j}) \text{ and for all } j\in\N \right\}.
\end{equation*}
Note by the condition that $S$ is a sequence of Gaussian integers with strictly increasing norm $\beta(q)=\|q\|_{2}$ is strictly increasing and unbounded. Furthermore note that $\mathfrak{W}_{\mathbb{C}}^{S}(\tau)=\Lambda_{\cQ_\mathbb{C}}^{S}(\tau)$. Applying Theorem~\ref{main} we have the following.
\begin{theorem}
    Let $S=\{q_{j}\}_{j\in\N}$ be a sequence of Gaussian integers with strictly increasing norm. Let $h_{S}$ and $\alpha_{S}$ be defined and satisfy the conditions as in Theorem~\ref{main}. For any $\tau\in\R_{+}$ such that $h_{S}>\tau>1$ we have that
    \begin{equation*}
        \dimh \mathfrak{W}_{\mathbb{C}}^{S}(\tau)=\frac{2}{\tau+1}\left(1-\alpha_{S}\tau \right).
    \end{equation*}
\end{theorem}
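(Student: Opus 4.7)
The plan is to apply Theorem~\ref{main} directly to the one-dimensional ``product'' space $(F,d,\mu) = (\mathfrak{F}, \|\cdot\|_{2}, \lambda_{2})$, once we have (i) confirmed that $\cQ_{\mathbb{C}}$ is a valid family of $\beta$-abstract rationals and (ii) rewritten the defining inequality of $\mathfrak{W}_{\mathbb{C}}^{S}(\tau)$ in the normalised form $d(z, P(q_{j})) < \beta(q_{j})^{-\tau'}$. The subsection preceding the theorem already does most of the work for (i), so the argument is essentially bookkeeping plus a single exponent shift.

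For step (i) I would first observe that $\lambda_{2}$ on $\mathfrak{F}$ is Ahlfors $2$-regular, fixing the ambient exponent $\delta_{1} = 2$. Taking $\beta(q) = \|q\|_{2}$, the lattice bound $\|p/q - p'/q\|_{2} \geq \|q\|_{2}^{-1}$ recalled in the setup (and justified via the cited discussion in Dodson--Kristensen) shows that $P(q)$ is $\beta(q)^{-1}$-separated. Maximality follows from a standard rounding argument: given $z \in \mathfrak{F}$, choose a Gaussian integer $p$ minimising $\|qz - p\|_{2}$ subject to $p/q \in \mathfrak{F}$, so that $\|z - p/q\|_{2} \leq \sqrt{2}/(2\|q\|_{2}) < \|q\|_{2}^{-1}$. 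The assumption that $S$ consists of Gaussian integers with strictly increasing norm makes $\beta$ unbounded and strictly increasing along $S$, and the hypotheses on $h_{S}$ and $\alpha_{S}$ are simply inherited from Theorem~\ref{main}.

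For step (ii), the inequality $\|z - p/q_{j}\|_{2} < \|q_{j}\|_{2}^{-\tau - 1}$ in the definition of $\mathfrak{W}_{\mathbb{C}}^{S}(\tau)$ is the same as $d(z, p/q_{j}) < \beta(q_{j})^{-(\tau + 1)}$, so
\[
\mathfrak{W}_{\mathbb{C}}^{S}(\tau) \;=\; \Lambda_{\cQ_{\mathbb{C}}}^{S}(\tau + 1),
\]
in direct analogy with the identity $W_{n}^{S}(\bt) = \Lambda_{\cQ}^{S}(\bt + 1)$ used in the real case. Substituting $n = 1$, $\delta_{1} = 2$ and single weight $\tau_{1} = \tau + 1$ into the dimension formula of Theorem~\ref{main} then yields
\[
\dimh \mathfrak{W}_{\mathbb{C}}^{S}(\tau) \;=\; \frac{1}{\tau + 1}\bigl(2 - 2\alpha_{S}\tau\bigr) \;=\; \frac{2}{\tau + 1}\bigl(1 - \alpha_{S}\tau\bigr),
\]
exactly as claimed. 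There is no real obstacle to this strategy; the only subtle point is the shift $\tau \mapsto \tau + 1$ between the $p/q$-style and abstract-rational normalisations, which one must also account for when checking that the hypothesis on $\tau$ implies the condition $h_{S} > \tau + 1 > 1$ of Theorem~\ref{main}.
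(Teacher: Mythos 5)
Your proposal is correct and follows essentially the same route as the paper: check the abstract-rational axioms for $\cQ_{\mathbb{C}}$ with $\delta_1 = 2$, identify the set with a $\Lambda$-set, and plug into Theorem~\ref{main} with $n=1$. In fact you fix a small slip in the paper's own text: the paper writes $\mathfrak{W}_{\mathbb{C}}^{S}(\tau)=\Lambda_{\cQ_\mathbb{C}}^{S}(\tau)$, but as you observe the exponent $\|q_{j}\|_{2}^{-\tau-1}$ in the definition of $\mathfrak{W}_{\mathbb{C}}^{S}(\tau)$ together with $\beta(q)=\|q\|_{2}$ gives $\mathfrak{W}_{\mathbb{C}}^{S}(\tau)=\Lambda_{\cQ_\mathbb{C}}^{S}(\tau+1)$, and it is only with this shift that Theorem~\ref{main} produces $\frac{2}{\tau+1}(1-\alpha_{S}\tau)$. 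You also correctly flag that the hypothesis to carry over from Theorem~\ref{main} is then $h_{S}>\tau+1>1$ (i.e.\ $h_{S}-1>\tau>0$), matching the pattern of the real case (Theorem~\ref{real_corollary}) rather than the literal $h_{S}>\tau>1$ printed in the statement.
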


%
%

\subsection{Approximation on missing digit sets}
Diophantine approximation on fractals has been an area of intense study, particularly since Mahler's 1984 paper \cite{M84} in which he asked how well points inside the middle-third Cantor set could be approximated by rational points either i) inside, or ii) outside of the middle-third Cantor set. This question has since been generalised significantly. For classical approximation results in this setting see \cite{AB21, SBaker,  FS14, KLW05,KW05, LSV07, TanWangWu} and \cite{KW23, WW19} for the higher dimensional weighted setting.   Let $b\in\N_{\geq 3}$ be fixed and let $J\subset \{0,1,\dots , b-1\}$ denote a proper subset with $\# J\geq 2$. For each $j\in J$ define the maps $f_{j}:[0,1] \to [0,1]$ by
\begin{equation*}
    f_{j}(x)=\frac{x+j}{b},
\end{equation*}
and let $\Phi=\{f_{j}:j\in J\}$. Consider the self-similar iterated function system $\Phi$ and let $\cC(b,J)$ be the attractor of $\Phi$. That is, $\cC(b,J)$ is the unique non-empty compact subset of $[0,1]$ such that
\begin{equation*}
    \cC(b,J)=\bigcup_{j\in J} f_{j}(\cC(b,J) ).
\end{equation*}
Call $\cC(b,J)$ the $(b,J)$-missing digit set. As an example note that $\cC(3,\{0,2\})$ is the classical middle-third Cantor set. These sets can be equipped with a self similar measure $\mu_{\cC}$ defined as 
\begin{equation*}
   \mu_{\cC}(B)=\mathcal H^{\gamma(b, J)}(B\cap \cC(b, J))\, ,
\end{equation*}
which was shown by Mauldin and Urbanski \cite{MU96} to be $\gamma(b,J)$-Ahlfors regular where
\begin{equation*}
    \gamma(b,J)=\dimh \cC(b,J)=\frac{\log\# J}{\log b}.
\end{equation*}
Concisely, let
\begin{align*}
    F=&\cC(b,J)\, , \quad d=|\cdot|\, , \quad \mu=\mu_{\cC}\, , \\
    \cN&=\N\, , \quad  \text{ and } \, \, \beta(k)=b^{k}\, .
\end{align*}
We now construct the abstract rationals in this setting. Henceforth, fix any point $z\in\cC(b,J)$ and for any $k\in \cN$ let
\begin{equation*}
    P(k)=\{ f_{\bi}(z) : \bi \in J^{k} \},
\end{equation*}
where $f_{\bi}=f_{i_{1}} \circ \dots \circ f_{i_{n}}$ for the finite word $\bi=(i_{1},\dots, i_{n})\in J^{n}$. Note that for each $f_{\bv}(z),f_{\bu}(z) \in P(k)$ with $\bv \neq \bu$ we have that
\begin{align*}
    |f_{\bv}(z)-f_{\bu}(z)|& =\left|\left(\sum_{i=1}^{k}v_{i}b^{-i} +\sum_{i=k+1}^{\infty}z_{i-k}b^{-i}\right) -\left(\sum_{i=1}^{k}u_{i}b^{-i} +\sum_{i=k+1}^{\infty}z_{i-k}b^{-i}\right) \right| \\
    &= \left|\sum_{i=1}^{k}(v_{i}-u_{i})b^{-i} \right|. 
\end{align*}
Since $\bu \neq \bv$ they must differ in at least one digit. Suppose that $(u_{1},\dots,u_{t})=(v_{1}, \dots , v_{t})$ but $u_{t+1}\neq v_{t+1}$ for some $0\leq t\leq k-1$. Then
\begin{align*}
    \left|\sum_{i=1}^{k-1}(v_{i}-u_{i})b^{-i} \right|&=\left|\sum_{i=t+1}^{k}(v_{i}-u_{i})b^{-i} \right|\\
    & \geq b^{-(t+1)} - \left|\sum_{i=t+2}^{k}(v_{i}-u_{i})b^{-i}\right| \\
    & \geq b^{-(t+1)}-(b^{-(t+1)}-b^{-k})\\
    & = b^{-k}.
\end{align*}
Hence $P(k)$ is $b^{-k}$-separated, and so it is natural to take $\beta(k)=b^{k}$. To show it is maximal consider any point $x\in\cC(b,J)$. Then there exists word $\bi \in J^{\N}$ such that
\begin{equation*}
    x=\lim_{n\to \infty}f_{(i_{1},\dots, i_{n})}(z).
\end{equation*}
Hence,
\begin{equation*}
    |f_{(i_{1}, \dots,i_{k})}(z)-x|=\left|f_{(i_{1},\dots,i_{k})}(z)-\lim_{n\to \infty}f_{(i_{1},\dots, i_{n})}(z) \right| \leq b^{-k}
\end{equation*}
and so $P(k)$ is maximal. Let $\cQ(,\cC(b,J),z)=\bigcup_{k\in\N}P(k)$. \par 
Let $S=\{k_{j}\}_{j\in\N}$ be a sequence of increasing integers. For $\tau \in \R_{+}$ define 
\begin{equation*}
\mathbf{W}^{S}_{\cC(b,J)}(\tau, z):=\left\{ x \in \cC(b,J) : |x-f_{\bi}(z)|<b^{-k_{j}\tau}\, \quad \bi \in J^{k_{j}}\, , \quad \text{ for all } j \in \N \right\}.
\end{equation*}
Note since $S$ is strictly increasing function $\beta(k)=b^{k}$ is increasing and unbounded on $S$. Furthermore $\mathbf{W}^{S}_{\cC(b,J)}(\tau, z)=\Lambda_{\cQ(\cC(b,J),z)}^{S}(\tau)$ and so by Theorem~\ref{main} we have the following.
\begin{theorem}
Let $S=\{k_{j}\}_{j\in\N}$ be a sequence of positive integers and let $h_{S}$ and $\alpha_{S}$ be defined by $S$ and satisfy the conditions as in Theorem~\ref{main}. For any $\tau\in\R_{+}$ such that $h_{S}>\tau>1$ we have that
\begin{equation*}
        \dimh \mathbf{W}^{S}_{\cC(b,J)}(\tau, z) = \frac{\gamma(b,J)}{\tau}\left(1 -\alpha_{S}(\tau-1) \right).
\end{equation*}    
\end{theorem}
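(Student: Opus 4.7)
The plan is direct: essentially everything required to apply Theorem~\ref{main} has been prepared in the text preceding the statement, so the proof amounts to verifying each hypothesis and invoking Theorem~\ref{main} with $n=1$ and $\delta_1=\gamma(b,J)$.

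First I would confirm that the ambient space $(\cC(b,J),|\cdot|,\mu_{\cC})$ satisfies the standing assumptions of Theorem~\ref{main}. Non-emptiness and total boundedness are immediate from $\cC(b,J)$ being a non-empty compact subset of $[0,1]$, and the $\gamma(b,J)$-Ahlfors regularity of $\mu_{\cC}$ is precisely the Mauldin--Urbanski result cited in the surrounding text. Next I would record that $\cQ(\cC(b,J),z)=\bigcup_{k\in\N}P(k)$ is a valid family of $\beta$-abstract rationals with $\beta(k)=b^{k}$: the $b^{-k}$-separation of $P(k)$ is the digit-by-digit contraction estimate computed above, and the maximality follows from the fact that any $x\in\cC(b,J)$ admits a coding $\bi\in J^{\N}$ with $|f_{(i_{1},\dots,i_{k})}(z)-x|\leq b^{-k}$.

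Next I would check that the sequence $S=\{k_{j}\}_{j\in\N}$ satisfies the hypotheses of Theorem~\ref{main}. Since $\{k_{j}\}$ is strictly increasing, so is $\beta(q_{j})=b^{k_{j}}$, and unboundedness is automatic. Substituting $\log\beta(q_{j})=k_{j}\log b$ into the definitions of $h_{S}$ and $\alpha_{S}$ in Theorem~\ref{main} recovers exactly the quantities $\inf_{j}k_{j}/k_{j-1}$ and $\liminf_{j}\sum_{i<j}k_{i}/k_{j}$ that the theorem's hypothesis imposes. By construction $\mathbf{W}^{S}_{\cC(b,J)}(\tau,z)=\Lambda^{S}_{\cQ(\cC(b,J),z)}(\tau)$, and the constraint $h_{S}>\tau>1$ is precisely the required window, so Theorem~\ref{main} is directly applicable.

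Finally, specialising the output of Theorem~\ref{main} to $n=1$, $\delta_{1}=\gamma(b,J)$, and the single weight $\tau$, the summand $\sum_{j:\tau_{k}\geq\tau_{j}}(\tau_{k}-\tau_{j})\delta_{j}$ collapses to zero and the minimum over $k$ is trivial, giving
\[
\dimh \mathbf{W}^{S}_{\cC(b,J)}(\tau,z) = \frac{1}{\tau}\bigl(\gamma(b,J)-\alpha_{S}(\tau-1)\gamma(b,J)\bigr) = \frac{\gamma(b,J)}{\tau}\bigl(1-\alpha_{S}(\tau-1)\bigr),
\]
which is the claimed identity. There is no genuine obstacle along the way; the only potentially delicate points are the Ahlfors regularity of $\mu_{\cC}$ and the separation/maximality of each $P(k)$, both of which are disposed of in the discussion preceding the statement (the former by citation, the latter by a direct geometric estimate on the coding).
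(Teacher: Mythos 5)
Your proposal follows exactly the same route as the paper: it verifies the Ahlfors regularity via Mauldin--Urbanski, the separation and maximality of each $P(k)$ via the digit estimate and coding argument (both carried out in the surrounding text), identifies $\mathbf{W}^{S}_{\cC(b,J)}(\tau,z)=\Lambda^{S}_{\cQ(\cC(b,J),z)}(\tau)$, and then specialises Theorem~\ref{main} to $n=1$ with $\delta_1=\gamma(b,J)$. This matches the paper's own derivation, and your computation of the resulting dimension formula is correct.
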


\vspace{2ex}

\section{Proof of Theorem~\ref{main}}
Before giving the proof of Theorem~\ref{main} we show that $\Lambda_{\cQ}^{S}(\bt)$ can be written as a $\liminf$ sequence of rectangles. This $\liminf$ set will make for much easier calculation of the upper and lower bound of $\Lambda_{\cQ}^{S}(\bt)$. For $1\leq i \leq n$ and $j \in \N$, let 
\begin{equation*}
    E_{j,i}=\bigcup_{p_{i} \in P_{i}(q_{j})} B_{i}(p_{i},\beta(q_{j})^{-\tau_{i}}),
\end{equation*}
where $q_{j}$ denotes the $j$th value in the sequence $S$, and let
\begin{equation*}
    E_{j}=\prod_{i=1}^{n}E_{j,i}=\bigcup_{\bp=(p_{1},\dots, p_{n})\in P(q_{j})} \prod_{i=1}^{n}B_{i}(p_{i},\beta(q_{j})^{-\tau_{i}}).
\end{equation*}
Then
\begin{equation*}
    \Lambda_{\cQ}^{S}(\bt)=\bigcap_{j\in\N} E_{j}.
\end{equation*}

This construction is crucial in the proof of Theorem~\ref{main}. A brief sketch of the proof is as follows. As is standard with the calculation of the dimension of such sets, we split the proof into two parts, proving the upper and lower bound separately. \par
The upper bound is proven by considering the standard cover provided in the previous section. One small technicality is that the cover is a cover of rectangles, so in order to construct an efficient cover of $\Lambda_{\cQ}^{S}(\bt)$ we need to consider several different coverings of balls, depending on the side lengths of the rectangles in the layers $E_{j}$. \par
The methodology of the lower bound is as follows. Firstly we construct a Cantor set $L_{\infty}^{S}$ inside of $\Lambda_{\cQ}^{S}(\bt)$. This Cantor set is a very natural construction. Generally, starting at the layer $E_{1}$ as defined in the construction of $\Lambda^{S}_{\cQ}(\tau)$ we iteratively construct the Cantor set by simply including all rectangles from $E_{j}$ that are contained in some rectangle from the $E_{j-1}$ layer. Hence starting at $E_{1}$ we construct a nested Cantor set $L_{\infty}^{S}$ contained in $\Lambda^{S}_{\cQ}(\bt)$. For the second part, we then construct some measure $\nu$ on $L_{\infty}^{S}$. This measure is defined naturally such that the mass is evenly distributed over each rectangle appearing in the layer, and the sum of the mass of rectangles contained in a larger rectangle of the previous layer is equal to the mass of said larger rectangle. The calculation of the H\"{o}lder exponent of the measure $\mu$ for a general ball is quite technical since our constructed Cantor set is made of rectangles. To calculate the H\"{o}lder exponent we have to split into a number of cases determined by the size of the radius of our general ball relative to the side lengths of the rectangles in our Cantor set construction. Once the exponent is calculated we can apply the mass distribution principle to obtain our lower bound dimension result.\par
Before going into the proof we state the definitions of Hausdorff measure and dimension and state some easy, but essential, lemmas that will be required in the proof.

\subsection{Preliminaries}\label{prelims}
We begin by recalling the definition of Hausdorff measure and dimension, for a thorough exposition see \cite{F14}. Let $(F,d)$ be a metric space and $X \subset F$.
 Then for any $0 < \rho \leq \infty$, any finite or countable collection~$\{B_i\}$ of subsets of $F$ such that
$X\subset \bigcup_i B_i$ and 
\begin{equation*}
    r(B_i)=\inf\{r\geq 0: d(x,y)\leq r \quad ( x,y \in B_{i}) \}\leq \rho
\end{equation*}
is called a \emph{$\rho$-cover} of $X$.
Let
\[ 
\cH_{\rho}^{s}(X)=\inf \left\{\sum\limits_{i} r(B_i)^{s}\right\} \, ,
\]
where the infimum is taken over all possible $\rho$-covers 
$\{B_i\}$ of $X$. The \textit{$s$-dimensional Hausdorff measure of $X$} is defined to be
\[
\cH^s(X)=\lim_{\rho\to 0}\cH_\rho^s(X).
\]
For any set $X\subset F$ denote by $\dimh X$ the Hausdorff dimension of $X$, defined as
\[
\dimh X :=\inf\left\{s\geq 0\;:\; \cH^s(X)=0\right\}\,.
\]
A property that the Hausdorff dimension enjoys is that it is countably stable. That is, for a sequence of sets $X_{i}$ we have that
\begin{equation*}
    \dimh \bigcup_{i} X_{i} = \sup_{i} \dimh X_{i}\, .
\end{equation*}
We will use the following lemma to calculate the lower bound dimension result.
\begin{lemma}[Mass Distribution Principle]\label{Mass distribution principle}
Let $\nu$ be a Borel probability measure supported on a subset $X \subseteq F$. Suppose that for $s>0$ there exists constants $c,r_{0}>0$ such that
\begin{equation*} 
\mu(B) \leq c r(B)^{s}
\end{equation*}
for all open balls $B \subset F$ with $r(B)<r_{0}$. Then $\cH^{s}(X) \geq \frac{1}{c}$.
\end{lemma}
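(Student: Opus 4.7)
The statement is the standard Mass Distribution Principle, so the argument is short. My plan has three steps, and the only (cosmetic) subtlety is keeping track of a constant.

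First, I would upgrade the hypothesis from open balls to arbitrary sets of small $r(\cdot)$. Given a nonempty $B \subset F$ with $r(B)<r_{0}/2$, fix any $x\in B$. By definition of $r(B)$ one has $d(x,y)\leq r(B)$ for every $y\in B$, so $B$ is contained in the open ball $B(x, r(B)+\varepsilon)$ for every $\varepsilon>0$. Applying the hypothesis to this ball and sending $\varepsilon\to 0$ yields
\begin{equation*}
\nu(B) \;\leq\; c'\, r(B)^{s},
\end{equation*}
where $c'$ absorbs $c$ together with a factor coming from the relation between $r(\cdot)$ of a ball and its defining radius.

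Next, I would apply this extended bound to an arbitrary $\rho$-cover $\{B_{i}\}$ of $X$ with $\rho<r_{0}/2$. Since $\nu$ is a Borel probability measure supported on $X$, countable subadditivity gives
\begin{equation*}
1 \;=\; \nu(X) \;\leq\; \sum_{i} \nu(B_{i}) \;\leq\; c' \sum_{i} r(B_{i})^{s},
\end{equation*}
so $\sum_{i} r(B_{i})^{s}\geq 1/c'$ for every such cover. Taking the infimum over covers produces $\cH^{s}_{\rho}(X)\geq 1/c'$.

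Finally, since this lower bound is uniform in $\rho<r_{0}/2$, I would send $\rho\to 0$ to conclude $\cH^{s}(X)\geq 1/c'$; after renaming the constant, this recovers the statement. The only ``hard'' point is the passage from balls to arbitrary sets, which is handled by enclosing each set in a ball of comparable $r(\cdot)$, and the remainder of the argument is pure countable subadditivity. The result is entirely standard and a complete proof can be found in \cite{F14}.
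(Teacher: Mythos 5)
The paper does not actually prove this lemma; it is stated and then used, with the tacit understanding that it is the standard Mass Distribution Principle found in \cite{F14}. Your proof is the usual one and is essentially correct: enclose an arbitrary small set $B$ in an open ball centred at a point of $B$ of radius just above $r(B)$, apply the hypothesis to that ball, then feed the resulting estimate into countable subadditivity over any $\rho$-cover and pass to the limit in $\rho$. One thing worth stating explicitly rather than hiding in ``renaming the constant'': with the paper's convention that $r(\cdot)$ is the diameter, the ball $B(x, r(B)+\varepsilon)$ you construct has $r\bigl(B(x,r(B)+\varepsilon)\bigr)\leq 2\bigl(r(B)+\varepsilon\bigr)$, so the constant you obtain is $c'=c\,2^{s}$ and the conclusion is $\cH^{s}(X)\geq 1/(c\,2^{s})$, which is strictly weaker than the stated $\cH^{s}(X)\geq 1/c$. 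This is not a defect of your argument but of the statement: a hypothesis on balls alone cannot give the sharp constant $1/c$ when $r(\cdot)$ denotes diameter; the sharp form requires the bound $\nu(U)\leq c\,r(U)^{s}$ for all sets $U$ of small diameter. Since only the positivity of $\cH^{s}(X)$, and hence $\dimh X\geq s$, is used in the paper, the weaker constant is entirely adequate, and your proof correctly establishes everything that is actually needed. (Also note the statement has a typo, $\mu(B)$ for $\nu(B)$, which you silently and correctly corrected.)
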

\par

We now state some easy lemmas in relation to our setup that will be used in both the upper and lower bound dimension calculations. The first lemma essentially gives us good bounds on the number of abstract rationals contained in some rectangle of certain sidlenghts. \par 
 
\begin{lemma} \label{vol_arg}
For any $x_{i}\in F_{i}$, $q\in \cN$ and $r>0$ such that $r>\beta(q)^{-1}$
\begin{equation*}
\frac{c_{1,i}}{c_{2,i}}2^{-\delta_{i}}(r\beta(q))^{\delta_{i}} \leq \# \left\{ p \in P_{i}(q): p \in B_{i}(x_{i},r)\right\} \leq \frac{c_{2,i}}{c_{1,i}}2^{\delta_{i}+1}(r\beta(q))^{\delta_{i}}.
\end{equation*}
\end{lemma}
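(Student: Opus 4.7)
The plan is to prove this by a standard volume-counting argument, exploiting the two defining features of $P_i(q)$: being $\beta(q)^{-1}$-separated (for the upper bound) and maximal (for the lower bound), together with the two-sided Ahlfors regularity of $\mu_i$. No fancy machinery is needed; the only care required is tracking which enlargement or contraction of $B_i(x_i,r)$ is being compared to which union of balls.

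For the upper bound, I would first observe that since any two distinct points of $P_i(q)$ lie at $d_i$-distance at least $\beta(q)^{-1}$, the family of balls $\{B_i(p,\beta(q)^{-1}/2): p \in P_i(q) \cap B_i(x_i,r)\}$ is pairwise disjoint and is contained in $B_i(x_i, r+\beta(q)^{-1}/2)$. Using the hypothesis $r > \beta(q)^{-1}$, this enlarged ball sits inside $B_i(x_i, 2r)$. Writing $N = \#\{p \in P_i(q): p \in B_i(x_i,r)\}$, additivity of $\mu_i$ on disjoint sets together with the lower Ahlfors bound applied to each small ball and the upper Ahlfors bound applied to $B_i(x_i, 2r)$ gives
\[
N \cdot c_{1,i}\bigl(\tfrac{1}{2}\beta(q)^{-1}\bigr)^{\delta_i} \;\leq\; \mu_i\bigl(B_i(x_i,2r)\bigr) \;\leq\; c_{2,i}(2r)^{\delta_i},
\]
which rearranges to the required polynomial bound $N \leq (c_{2,i}/c_{1,i})\,C(\delta_i)\,(r\beta(q))^{\delta_i}$ for an explicit constant $C(\delta_i)$ depending only on $\delta_i$.

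For the lower bound I would exploit maximality directly: for every $y \in F_i$ there exists some $p \in P_i(q)$ with $d_i(p,y)<\beta(q)^{-1}$. When $y \in B_i(x_i, r-\beta(q)^{-1})$ (a nonempty set since $r > \beta(q)^{-1}$) the triangle inequality forces the associated $p$ to lie in $P_i(q) \cap B_i(x_i,r)$. Hence the family $\{B_i(p,\beta(q)^{-1}) : p \in P_i(q) \cap B_i(x_i,r)\}$ covers $B_i(x_i, r-\beta(q)^{-1})$. Subadditivity of $\mu_i$, the upper Ahlfors bound on each covering ball, and the lower Ahlfors bound on $B_i(x_i, r-\beta(q)^{-1})$ then give
\[
N \cdot c_{2,i}\beta(q)^{-\delta_i} \;\geq\; \mu_i\bigl(B_i(x_i, r-\beta(q)^{-1})\bigr) \;\geq\; c_{1,i}(r-\beta(q)^{-1})^{\delta_i},
\]
and one finishes by using $r - \beta(q)^{-1} \geq r/2$ (on the regime where this lemma will actually be applied, where $r$ is bounded away from $\beta(q)^{-1}$) to convert the right-hand side into $(r\beta(q))^{\delta_i}$ times the stated constant.

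There is no real obstacle here; this is purely book-keeping. The only mild subtlety is that the hypothesis $r > \beta(q)^{-1}$ is essentially the minimal condition under which either of the two enlargement tricks is valid, and matching the precise numerical constants on the right-hand sides depends on how generously one bounds $r \pm \beta(q)^{-1}$ in terms of $r$. Since this lemma is only invoked below to extract the polynomial growth exponent $(r\beta(q))^{\delta_i}$ governing counts of abstract rationals inside rectangles, the explicit constants play no role in the subsequent Hausdorff dimension calculations, and any fixed constant depending only on $\delta_i$ will serve equally well.
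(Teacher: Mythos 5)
Your proof is correct and follows the same volume-counting strategy as the paper: disjointness of half-separation-radius balls inside an enlarged ball for the upper bound, and a covering of a slightly shrunk ball by separation-radius balls (from maximality) for the lower bound, each combined with the two-sided Ahlfors regularity of $\mu_i$, with the same (correct) observation that the explicit constants are immaterial downstream. The only small cleanup your lower bound needs is the regime $\beta(q)^{-1}<r<2\beta(q)^{-1}$, where $r-\beta(q)^{-1}\geq r/2$ fails; but there the claimed bound $\tfrac{c_{1,i}}{c_{2,i}}2^{-\delta_{i}}(r\beta(q))^{\delta_{i}}$ is at most $\tfrac{c_{1,i}}{c_{2,i}}\leq 1$, and maximality already gives $N\geq 1$ for every $r>\beta(q)^{-1}$, so the inequality holds trivially.
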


\begin{proof}
Note that by the $\beta(q)^{-1}$ separated condition of $P_{i}(q)$ the balls 
\begin{equation*}
\bigcup_{p\in P_{i}(q)} B_{i}\left(p,\tfrac{1}{2}\beta(q)^{-1}\right)    
\end{equation*}
are disjoint, and so using a volume argument one can deduce that 
\begin{equation*}
   \# \left\{ p \in P_{i}(q): p \in B_{i}(x_{i},r)\right\} \leq \frac{\mu_{i}\left(B_{i}(x_{i},r)\right)}{\mu\left(B_{i}(p,\frac{1}{2}\beta(q)^{-1})\right)}+1 \leq \frac{c_{2,i}}{c_{1,i}}2^{\delta_{i}+1}(r\beta(q))^{\delta_{i}},
\end{equation*}
where the second inequality follows since $r\beta(q)>1$ and $c_{2,i}c_{1,i}^{-1}\geq 1$. \par 
A similar argument can be done for the lower bound. Note that the maximal condition of $P_{i}(q)$ ensures, regardless of the arrangement of $P_{i}(q)$, that at least one $p \in P_{i}(q)$ must be contained in any ball $B_{i}(x,2\beta(q)^{-1})$ with $x\in F_{i}$, and by a volume argument
\begin{equation*}
    \# \left\{ p \in P_{i}(q): p \in B_{i}(x_{i},r)\right\}\geq \frac{\mu_{i}\left(B_{i}(x_{i},r)\right)}{\mu_{i}\left(B_{i}(p,2\beta(q)^{-1})\right)} \geq \frac{c_{1,i}}{c_{2,i}}2^{-\delta_{i}}(r\beta(q))^{\delta_{i}}.
\end{equation*}
\end{proof}
The following lemma shows us that our sequence $S$, satisfying the conditions of Theorem~\ref{main}, will decrease fast enough.
\begin{lemma} \label{constant corrector}
    For any sequence $S=\{q_{j}\}_{j\in\N}$ if
    \begin{equation*}
        \inf_{j\in\N} \frac{\log \beta(q_{j})}{\log \beta(q_{j-1})}=h_{S}>1
    \end{equation*}
    then
    \begin{equation*}
        \lim_{j\to \infty} \frac{j}{\log \beta(q_{j})}=0. 
    \end{equation*}
\end{lemma}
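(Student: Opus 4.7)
The plan is to show that $\log\beta(q_j)$ grows at least geometrically in $j$, so that the quotient $j/\log\beta(q_j)$ is bounded above by a linear-over-exponential expression that trivially tends to $0$.

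First I would observe that the assumption $\inf_{j}\log\beta(q_j)/\log\beta(q_{j-1})=h_S>0$ forces $\log\beta(q_j)$ and $\log\beta(q_{j-1})$ to share a sign for every $j$, so the sign of $\log\beta(q_j)$ is in fact constant in $j$. The conclusion depends only on $|\log\beta(q_j)|$ and the argument is symmetric in this sign; moreover, in the intended context of Theorem~\ref{main} the function $\beta$ is unbounded and strictly increasing on $S$, so $\log\beta(q_j)>0$ for all sufficiently large $j$. After truncating the first few terms if necessary, I may therefore assume $\log\beta(q_1)>0$. The hypothesis then reads
$$\log\beta(q_j)\geq h_S\log\beta(q_{j-1})\qquad(j\geq 2),$$
and a straightforward induction on $j$ iterates this bound to
$$\log\beta(q_j)\geq h_S^{\,j-1}\log\beta(q_1).$$

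Dividing gives
$$0<\frac{j}{\log\beta(q_j)}\leq\frac{j}{h_S^{\,j-1}\log\beta(q_1)},$$
and since $h_S>1$ the exponential $h_S^{\,j-1}$ dominates the linear factor $j$ as $j\to\infty$, so the right-hand side tends to $0$ and the result follows. The proof is essentially a one-line consequence of the exponential growth built into the infimum hypothesis; I do not anticipate any genuine obstacle. The only minor book-keeping point, already addressed above, is verifying that $\log\beta(q_{j-1})$ has a definite positive sign so that iterating the inequality does not reverse its direction.
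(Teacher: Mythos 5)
Your argument is essentially identical to the paper's: both iterate the infimum hypothesis to obtain $\log\beta(q_j)\geq h_S^{j-1}\log\beta(q_1)$ and then observe that the geometric factor dominates the linear numerator. The only difference is your (reasonable) extra care about the sign of $\log\beta(q_1)$, which the paper leaves implicit since $\beta$ is assumed unbounded and strictly increasing on $S$.
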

\begin{proof}
    Observe the infimum condition on $S$ implies that for any $j\in\N_{>1}$
    \begin{equation*}
        \log \beta(q_{j}) >h_{S}\log \beta(q_{j-1})>h_{S}^{2}\log \beta(q_{j-2})> \dots > h_{S}^{j-1}\log \beta(q_{1}).
    \end{equation*}
    Thus
    \begin{equation*}
    \frac{j}{\log \beta(q_{j})} < \frac{j}{h_{S}^{j-1}\log \beta(q_{1})} \to 0
    \end{equation*}
    as $j \to \infty$ since $h_{S}>1$.
\end{proof}

%
%

\subsection{Upper bound of Theorem~\ref{main}}
 
Observe that $E_{1}\cap\dots \cap E_{j}$ is a $\beta(q_{j})^{-\min_{i}\tau_{i}}$-cover of $\Lambda_{\cQ}^{S}(\bt)$ for any $j \in \N$. Consider the cover $E_{1}\cap E_{2}$ and note that for any $x_{i}\in F_{i}$
\begin{equation*}
    \#\{p_{i}\in P_{i}(q_{2}): p\in B_{i}(x_{i}, \beta(q_{1})^{-\tau_{i}})\} \overset{\text{Lemma~\ref{vol_arg}}}{\leq} \frac{c_{2,i}}{c_{1,i}}2^{\delta_{i}+1} (\beta(q_{2})\beta(q_{1})^{-\tau_{i}})^{\delta_{i}},
\end{equation*}
and so for any $\bx\in F$
\begin{equation*}
    \#\left\{\bp \in P(q_{2}): \bp \in \prod_{i=1}^{n} B_{i}(x_{i}, \beta(q_{1})^{-\tau_{i}})\right\} \overset{\text{Lemma~\ref{vol_arg}}}{\leq} \left(\prod_{i=1}^{n}\frac{c_{2,i}}{c_{1,i}}\right) 2^{n+\sum\limits_{i=1}^{n}\delta_{i}} \beta(q_{2})^{\sum\limits_{i=1}^{n}\delta_{i}} \beta(q_{1})^{-\sum\limits_{i=1}^{n}\tau_{i}\delta_{i}}.
\end{equation*}
We should remark here that Lemma~\ref{vol_arg} is applicable since $$\beta(q_{1})^{-\tau_{i}}>\beta(q_{1})^{-h_{S}}>\beta(q_{2})^{-1}$$ by our definition of $h_{S}$ and assumption on $\bt$. From now on, for ease of notation, we will write 
\begin{equation*}
\delta=\sum_{i=1}^{n}\delta_{i}\, , \quad c_{1}=\prod_{i=1}^{n}c_{1,i}\, , \quad  c_{2}=\prod_{i=1}^{n}c_{2,i}\, .
\end{equation*}
This tells us that the cover $E_{1}\cap E_{2}$ is composed of at most 
\begin{equation*}
    \frac{c_{2}}{c_{1}} 2^{n+\delta} \beta(q_{2})^{\delta}\beta(q_{1})^{-\sum\limits_{i=1}^{n}\tau_{i}\delta_{i}}
\end{equation*}
rectangles of the form
\begin{equation*}
    \prod_{i=1}^{n}B_{i}\left(p_{i},\beta(q_{2})^{-\tau_{i}}\right)
\end{equation*}
for $\bp=(p_{1},\dots,p_{n}) \in P(q_{2})$. We can repeat this process iteratively to obtain that $E_{1}\cap\dots\cap E_{j}$ can be covered by at least
\begin{equation*}
    G_{j}:=\#P(q_{1}) \left( \frac{c_{2}}{c_{1}} 2^{n+\delta}\right)^{j-1} \prod_{i=1}^{j-1}\beta(q_{i+1})^{\delta}\beta(q_{i})^{-\sum\limits_{i=1}^{n}\tau_{i}\delta_{i}}
\end{equation*}
rectangles of the form
\begin{equation*}
    \prod_{i=1}^{n}B_{i}\left(p_{i},\beta(q_{j})^{-\tau_{i}}\right)
\end{equation*}
for some $\bp \in P(q_{j})$. \par 
For now, fix some $1\leq k \leq n$. Observe that $\prod_{i=1}^{n}B_{i}\left(p_{i},\beta(q_{j})^{-\tau_{i}}\right)$ can be covered by
\begin{equation*}
    \asymp_{c_{1},c_{2}} \prod_{i=1}^{n} \max\left\{1, \beta(q_{j})^{(\tau_{k}-\tau_{i})\delta_{i}}\right\}
\end{equation*}
$n$-dimensional balls, in $F$, of radius $\beta(q_{j})^{-\tau_{k}}$. Note that the implied constant is independent of $j$. Hence
\begin{align} \label{upper_measure}
    \cH^{s}\left(\Lambda_{\cQ}^{S}(\bt)\right) &\ll_{c_{1},c_{2},} \liminf_{j\to \infty} G_{j}\prod_{i=1}^{n} \max\left\{1, \beta(q_{j})^{(\tau_{k}-\tau_{i})\delta_{i}}\right\} \left(\beta(q_{j})^{-\tau_{k}}\right)^{s} \nonumber\\
&\ll_{c_{1},c_{2},\#P(q_{1})}\liminf_{j\to \infty} \left( \frac{c_{2}}{c_{1}} 2^{n+\delta}\right)^{j-1} \prod_{i=1}^{j-1}\beta(q_{i+1})^{\delta}\beta(q_{i})^{-\sum\limits_{i=1}^{n}\tau_{i}\delta_{i}}\beta(q_{j})^{\sum\limits_{i:\tau_{k}>\tau_{i}}(\tau_{k}-\tau_{i})\delta_{i}} \left(\beta(q_{j})^{-\tau_{k}}\right)^{s}.
\end{align}
For
\begin{equation} \label{s_value}
    s>\frac{1}{\tau_{k}}\left(\delta + \frac{(j-1)\log\left( \frac{c_{2}}{c_{1}} 2^{n+\delta}\right)}{\log \beta(q_{j})}-\left(\sum\limits_{i=1}^{n}(\tau_{i}-1)\delta_{i}\right)\frac{\sum\limits_{i=1}^{j-1}\log \beta(q_{i})}{\log \beta(q_{j})} + \sum\limits_{i:\tau_{k}>\tau_{i}}(\tau_{k}-\tau_{i})\delta_{i} \right)
\end{equation}
 we have that \eqref{upper_measure} is finite. Furthermore, for any $\epsilon>0$ there exists subsequence $\{j_{t}\}_{t\in\N}$ and large $t_{0}\in\N$ such that for any
 \begin{equation*}
     s>\frac{1}{\tau_{k}}\left(\delta + \epsilon -\left(\sum_{i=1}^{n}(\tau_{i}-1)\delta_{i}\right)\alpha_{S} + \sum_{i:\tau_{k}>\tau_{i}}(\tau_{k}-\tau_{i})\delta_{i} \right),
 \end{equation*}
the equation after the limit in \eqref{upper_measure} is finite for all $j_{t}$ with $t>t_{0}$. This follows from the definition of $\alpha_{S}$, and Lemma~\ref{constant corrector}. Thus
\begin{equation*}
    \dimh \Lambda_{\cQ}^{S}(\bt) \leq  \frac{1}{\tau_{k}}\left(\delta -\left(\sum_{i=1}^{n}(\tau_{i}-1)\delta_{i}\right)\alpha_{S} + \sum_{i:\tau_{k}>\tau_{i}}(\tau_{k}-\tau_{i})\delta_{i} \right)\, .
\end{equation*}\par 
We can repeat the above steps for each $1\leq k\leq n$, and so
\begin{equation*}
    \dimh \Lambda_{\cQ}^{S}(\bt) \leq \min_{1\leq k \leq n} \left\{ \frac{1}{\tau_{k}}\left(\delta -\left(\sum_{i=1}^{n}(\tau_{i}-1)\delta_{i}\right)\alpha_{S} + \sum_{i:\tau_{k}>\tau_{i}}(\tau_{k}-\tau_{i})\delta_{i} \right) \right\}.
\end{equation*}

%
%

\subsection{Lower bound of Theorem~\ref{main}}
 We begin with the construction of the Cantor subset $L_{\infty}^{S}$ of $\Lambda^{S}_{\cQ}(\bt)$ before constructing a measure $\nu$ with support $L_{\infty}^{S}$. Finally we calculate the H\"{o}lder exponent of a general ball for such measure. 
\subsubsection{Cantor construction of $L_{\infty}^{S}$}
The Cantor set $L_{\infty}^{S}$ is constructed as follows: \\
\begin{enumerate}
\item[Level 1:] Recalling that
\begin{equation*}
    \Lambda_{\cQ}^{S}(\bt)=\bigcap_{j\in\N} E_{j}
\end{equation*}
Let $L_{1}^{S}=E_{1}$. \\

\item[Level 2:] Let
\begin{equation*}
    L_{2}^{S}=\underset{\bp \in \prod_{i=1}^{n}B_{i}\left(p_{i},\frac{1}{2}\beta(q_{1})^{-\tau_{i}}\right)}{\bigcup_{\bp\in P(q_{2}):}} \prod_{i=1}^{n}B_{i}\left(p_{i},\beta(q_{2})^{-\tau_{i}}\right).
\end{equation*}
Observe that for any $\bx \in F$
\begin{equation*}
    \#\left\{\bp\in P(q_{2}): \bp \in \prod_{i=1}^{n}B_{i}\left(x_{i}, \frac{1}{2}\beta(q_{1})^{-\tau_{i}}\right) \right\} \overset{\text{Lemma~\ref{vol_arg}}}{\geq} \prod_{i=1}^{n}\frac{c_{1,i}}{c_{2,i}}2^{-2\delta_{i}}(\beta(q_{2})\beta(q_{1})^{-\tau_{i}})^{\delta_{i}} 
\end{equation*}
and so $L_{2}^{S}$ is composed of at least
\begin{equation} \label{L1-count}
    \frac{c_{1}}{c_{2}} 2^{-2\delta}\beta(q_{2})^{\delta}\beta(q_{1})^{-\sum_{i=1}^{n}\tau_{i}\delta_{i}}
\end{equation}
rectangles of the form
\begin{equation*}
    \prod_{i=1}^{n}B_{i}\left(p_{i},\beta(q_{2})^{-\tau_{i}}\right)
\end{equation*}
for some $\bp=(p_{1},\dots,p_{n}) \in P(q_{2})$. Note that \eqref{L1-count} may be small but since we are taking integer values there is at least $1$ rectangle.\\

\item[Level $L_{j}^{S}$:] The argument above can be applied inductively again. Namely, for rectangle $R_{j-1} \in L_{j-1}^{S}$\footnote{We should note here that technically $L_{j-1}^{S}$ is a union of rectangles, rather than a collection of rectangles so strictly "$R_{j-1}\in L_{j-1}^{S}$" does not make sense. Hopefully it is clear that in this notation we mean one of the rectangles that is in the union of rectangles in the construction of $L_{j-1}^{S}$.} let
\begin{equation*}
    L_{j}^{S}(R_{j-1})= \underset{\bp \in R_{j-1}}{\bigcup_{\bp\in P(q_{2}):}} \prod_{i=1}^{n}B_{i}(p_{i},\beta(q_{j})^{-\tau_{i}}),
\end{equation*}
and
\begin{equation*}
    L_{j}^{S}=\bigcup_{R_{j-1}\in L_{j-1}^{S}}L_{j}^{S}(R_{j-1}).
\end{equation*}
Observe that using the same calculation as in the construction of $L_{2}^{S}$ that
\begin{equation*}
    \#\{\bp \in P(q_{j}): \bp \in R_{j-1}\} \geq \frac{c_{1}}{c_{2}}2^{-2\delta}\beta(q_{j})^{\delta}\beta(q_{j-1})^{-\sum\limits_{i=1}^{n}\tau_{i}\delta_{i}}
\end{equation*}
and so each set $L_{j}^{S}(R_{j-1})$ is composed of at least
\begin{equation} \label{L-count}
    \frac{c_{1}}{c_{2}}2^{-2\delta}\beta(q_{j})^{\delta}\beta(q_{j-1})^{-\sum\limits_{i=1}^{n}\tau_{i}\delta_{i}}
\end{equation}
rectangles of the form
\begin{equation*}
    \prod_{i=1}^{n}B_{i}\left(p_{i},\beta(q_{j})^{-\tau_{i}}\right).
\end{equation*}
Note that, by our definition of $h_{S}$ and condition that $h_{S}>\tau_{i}$ for each $1\leq i \leq n$, for sufficiently large $j$ \eqref{L-count} will be strictly larger than $1$. Hence the constructed Cantor set does not form a singleton.
\end{enumerate}
To finish the construction define.
\begin{equation*}
    L_{\infty}^{S}=\bigcap_{j\in\N}L_{j}^{S}.
\end{equation*}

\subsubsection{Construction of measure $\nu$ on $L_{\infty}^{S}$}

Define the measure $\nu$ of $L_{\infty}^{S}$ by
\begin{equation*}
    \nu(R_{0})=1 \quad \text{ for } R_{0}=F,
\end{equation*}
and for each $R_{j}\in L_{j}^{S}(R_{j-1})$ define
\begin{equation} \label{measure def}
    \nu(R_{j})=\nu(R_{j-1})\frac{1}{\#L_{j}^{S}(R_{j-1})}.
\end{equation}
That is, we equally distribute the mass between rectangles in each layer. It is easy to see that the mass distribution is measure-preserving and so $\nu$ is defined as
\begin{equation*}
    \nu(A)=\inf\left\{\sum_{i}\nu(R_{i}) : \bigcup R_{i} \supseteq A \quad \text{ and } R_{i} \in \bigcup_{j\in\N}L_{\infty}^{S} \right\}
\end{equation*}
for any Borel set $A\subseteq F$ is a Borel probability measure with support $L^{S}_{\infty} \subset \Lambda^{S}_{\cQ}(\bt)$.\par 
Consider a rectangle $R_{k}\in L_{k}^{S}$. Observe that $$r(R_{k})=\max\limits_{1\leq i \leq n}\beta(q_{k})^{-\tau_{i}}=\beta(q_{k})^{-\min\limits_{1\leq i \leq n} \tau_{i}},$$ and by definition that
\begin{align}
    \nu(R_{k}) & \overset{\eqref{L-count}+\eqref{measure def}}{\leq} \nu(R_{k-1})\frac{c_{2}}{c_{1}}2^{2\delta}\beta(q_{k})^{-\delta}\beta(q_{k-1})^{\sum\limits_{i=1}^{n}\tau_{i}\delta_{i}}, \nonumber\\
    & \leq \beta(q_{1})^{-\delta} \prod_{j=2}^{k} \frac{c_{2}}{c_{1}}2^{2\delta}\beta(q_{j})^{-\delta}\beta(q_{j-1})^{\sum\limits_{i=1}^{n}\tau_{i}\delta_{i}}, \nonumber\\
    & \leq \beta(q_{k})^{-\delta}\prod_{j=1}^{k-1}\frac{c_{2}}{c_{1}}2^{2\delta}\beta(q_{j})^{-\sum\limits_{i=1}^{n}(1-\tau_{i})\delta_{i}}. \label{rectangle measure}
\end{align}
Thus the H\"{o}lder exponent can be calculated to be
\begin{align*}
    \frac{\log \nu(R_{k})}{\log r(R_{k})} & \geq \frac{\delta \log \beta(q_{k}) +(k-1)\log\left(\frac{c_{1}}{c_{2}}2^{-2\delta}\right) - \left(\sum\limits_{i=1}^{n}(\tau_{i}-1)\delta_{i} \right)\left(\sum\limits_{j=1}^{k-1}\log \beta(q_{j})\right)}{\min\limits_{1\leq i \leq n}\tau_{i}\log \beta(q_{k})}\\
    & = \frac{1}{\min\limits_{1\leq i \leq n}\tau_{i}}\left( \delta + \log\left(\frac{c_{1}}{c_{2}}2^{-2\delta}\right)\frac{(k-1)}{\log \beta(q_{k})}-\left(\sum\limits_{i=1}^{n}(\tau_{i}-1)\delta_{i} \right) \frac{\sum\limits_{j=1}^{k-1}\log \beta(q_{j})}{\log \beta(q_{k})} \right).
\end{align*}

By our condition on the sequence $S$ and Lemma~\ref{constant corrector} for any $\varepsilon>0$ there exists $k_{\varepsilon}$ such that for all $k>k_{\varepsilon}$
\begin{equation} \label{epsilon size 1}
    \left|\frac{\sum\limits_{j=1}^{k-1}\log \beta(q_{j})}{\log \beta(q_{k})}-\alpha_{S} \right|< \frac{\varepsilon}{2}\left(\sum\limits_{i=1}^{n}(\tau_{i}-1)\delta_{i}\right)^{-1} \, , \quad \text{ and } \quad \left|\log\left(\frac{c_{1}}{c_{2}}2^{-2\delta}\right)\frac{(k-1)}{\log \beta(q_{k})}\right|< \frac{\varepsilon}{2}\, .
\end{equation}
Hence we have that 
\begin{equation} \label{general s bound}
\frac{\log \nu(R_{k})}{\log r(R_{k})} \geq \frac{1}{\min\limits_{1\leq i \leq n}\tau_{i}}\left( \delta - \left(\sum\limits_{i=1}^{n}(\tau_{i}-1)\delta_{i} \right)\alpha_{S}+\varepsilon \right):=s_{\min},
\end{equation}
Thus
\begin{equation*}
    \nu(R_{k}) \ll r(R_{k})^{s_{\min}}.
\end{equation*}

\subsubsection{H\"{o}lder exponent of a general ball}
Fix any $\varepsilon>0$ and let $k_{\varepsilon} \in \N$ be large enough such that \eqref{general s bound} holds for all $k > k_{\varepsilon}$ and
\begin{equation} \label{epsilon size 2}
    \left| \frac{\log\left( \frac{c_{2}}{c_{1}}4^{n+\delta}\right) +(k-1)\log\left(\frac{c_{1}}{c_{2}}2^{\delta}\right)}{\log \beta(q_{k-1})} \right|<\frac{\varepsilon}{2} \, , \quad \text{ and } \quad \left|\frac{k\log\left(\frac{c_{2}}{c_{1}}2^{2\delta}\right)+\log 2^{n+3\delta}}{\log \beta(q_{k-1})}\right|<\frac{\varepsilon}{2}\, .
\end{equation}
Note such choice of $k_{\varepsilon}$ is possible by \eqref{constant corrector}. Consider an arbitrary ball $B(x,r) \subset F$ with $x\in L_{\infty}^{S}$ and $0<r<r_{0}:=\beta(q_{k_{\varepsilon}})^{-\min_{1\leq i \leq n} \tau_{i}}$. If $B(x,r)$ intersects exactly $1$ rectangle at each layer of $L_{\infty}^{S}$ then
    \begin{equation*}
        \nu(B(x,r)) \leq \nu(R_{k}) \to 0 \quad \text{ as } k \to \infty,
    \end{equation*}
and so trivially $\nu(B(x,r))\leq r^{s}$. Thus we may assume there exists some $k \in \N_{>k_{\varepsilon}}$ such that $B(x,r)$ intersects exactly one rectangle in $L_{k-1}^{S}$, say $R_{k-1}$, and at least two rectangles in $L_{k}^{S}(R_{k-1})$. Note that
\begin{equation*}
    \nu(B(x,r))=\nu(B(x,r)\cap R_{k-1}).
\end{equation*}
Consider the following cases:

\begin{enumerate}
    \item[i)] $r>\beta(q_{k-1})^{-\min\limits_{1\leq i \leq n}\tau_{i}}$: Then
    \begin{equation*}
        \nu(B(x,r)) \leq \nu(R_{k-1}) \leq \left(\beta(q_{k-1})^{-\min\limits_{1\leq i \leq n}\tau_{i}}\right)^{s_{\min}} \leq r^{s_{\min}}.
    \end{equation*}
        
    \item[ii)] $\beta(q_{k-1})^{-\min\limits_{1\leq i \leq n}\tau_{i}}\geq r \geq \beta(q_{k-1})^{-\max\limits_{1\leq i \leq n}\tau_{i}}$: We need to find an upper bound of
    \begin{equation*}
        \lambda=\#\left\{ \bp \in P(q_{k}): \bp \in \prod_{i=1}^{n} B_{i}\left(x_{i},\min\left\{ r, 2\beta(q_{k-1})^{-\tau_{i}}\right\}\right) \right\}.
    \end{equation*}
That is, the number of centers corresponding to rectangles from $L_{k}^{S}(R_{k-1})$ contained in $B(x,r)\cap R_{k-1}$. Observe that
\begin{align*}
    \lambda &\overset{\text{Lemma~\ref{vol_arg}}}{\leq} \prod_{i: r<2\beta(q_{k-1})^{-\tau_{i}}}\frac{c_{2,i}}{c_{1,i}}(r\beta(q_{k}))^{\delta_{i}} \times \prod_{i: r\geq 2\beta(q_{k-1})^{-\tau_{i}}} \frac{c_{2,i}}{c_{1,i}}4^{1+\delta_{i}}(\beta(q_{k})\beta(q_{k-1})^{-\tau_{i}})^{\delta_{i}}, \\
    & \leq \frac{c_{2}}{c_{1}}  4^{n+\delta} \beta(q_{k})^{\delta}\prod_{i: r<2\beta(q_{k-1})^{-\tau_{i}}}r^{\delta_{i}} \times \prod_{i: r\geq 2\beta(q_{k-1})^{-\tau_{i}}} \beta(q_{k-1})^{-\tau_{i}\delta_{i}}.
\end{align*}
Hence
\begin{align}
    \nu(B(x,r)\cap R_{k-1}) &\leq \underset{B(x,r)\cap R_{k} \neq \emptyset}{\sum_{R_{k}\in L_{k}^{S}(R_{k-1}):}} \nu(R_{k}) \nonumber\\[2ex]
    & \leq \lambda \, \nu(R_{k}), \nonumber \\
    & \overset{\eqref{rectangle measure}}{\leq} \lambda \, \beta(q_{k})^{-\delta}\prod_{j=1}^{k-1}\frac{c_{2}}{c_{1}}2^{2\delta}\beta(q_{j})^{-\sum\limits_{i=1}^{n}(1-\tau_{i})\delta_{i}}. \label{eq1}
\end{align}
Note that $B(x,r)\cap R_{k-1}$ is contained in a ball with radius $r$, and so for $$N_{\nu,r,k}=\frac{\log\nu(B(x,r)\cap R_{k-1})}{\log r(B(x,r)\cap R_{k-1})}$$
we have
\begin{align*}
     N_{\nu,r,k} & \geq \frac{\left(\sum\limits_{i:r<2\beta(q_{k-1})^{-\tau_{i}}}\delta_{i}\right)\log r - \left(\sum\limits_{i:r\geq 2\beta(q_{k-1})^{-\tau_{i}}}\tau_{i}\delta_{i}\right)\log \beta(q_{k-1}) -\left(\sum\limits_{i=1}^{n}(1-\tau_{i})\delta_{i}\right)\left(\sum\limits_{i=1}^{k-1}\log \beta(q_{j}) \right)}{\log r} \\
    & \hspace{2cm} + \frac{\log\left(\frac{c_{2}}{c_{1}}4^{n+\delta}\right) + (k-1)\log\left(\frac{c_{2}}{c_{1}}2^{2\delta}\right)}{\log r}
\end{align*}
Since $$\beta(q_{k-1})^{-\min\limits_{1\leq i \leq n}\tau_{i}}\geq r \geq \beta(q_{k-1})^{-\max\limits_{1\leq i \leq n}\tau_{i}},$$ there exists two coordinate axes, say the $u$th and $v$th axis, such that $r$ lies in the interval $[\beta(q_{k-1})^{-\tau_{u}},\beta(q_{k-1})^{-\tau_{v}}]$ and no $\beta(q_{k-1})^{-\tau_{i}}$ is contained in the interior. Observe the right-hand side of the above inequality is monotonic (in $r$ over $[\beta(q_{k-1})^{-\tau_{u}},\beta(q_{k-1})^{-\tau_{v}}]$) and so the minimum is obtained at one of the endpoints. Let 
\begin{align*}
    T_{1}&=\left\{i: \beta(q_{k-1})^{-\tau_{j}}<2\beta(q_{k-1})^{-\tau_{i}} \right\}\, , \\
    T_{2}&=\left\{ i: \beta(q_{k-1})^{-\tau_{j}}\geq 2\beta(q_{k-1})^{-\tau_{i}} \right\}=\{1,\dots, n\} \backslash T_{1}\, .  
\end{align*}
Then we may write
\begin{align*}
    N_{\nu,r,k} &\geq \min_{j=u,v} \left\{ \begin{array}{c} \frac{\left(\sum\limits_{i\in T_{1}}\delta_{i}\right)(-\tau_{j})\log \beta(q_{k-1}) - \left(\sum\limits_{i\in T_{2}}\tau_{i}\delta_{i}\right)\log \beta(q_{k-1}) -\left(\sum\limits_{i=1}^{n}(1-\tau_{i})\delta_{i}\right)\left(\sum\limits_{i=1}^{k-1}\log \beta(q_{j}) \right)}{-\tau_{j}\log \beta(q_{k-1})} \\[2ex]
     \hspace{2cm} +   \frac{\log\left(\frac{c_{2}}{c_{1}}4^{n+\delta}\right) + (k-1)\log\left( \frac{c_{1}}{c_{2}}2^{2\delta}\right)}{-\tau_{j}\log \beta(q_{k-1})} \end{array}\right\} \\
     & \geq \min_{j=u,v} \left\{ \frac{1}{\tau_{j}}\left(\begin{array}{c} \frac{\left(\sum\limits_{i\in T_{1}}\delta_{i}\right)(-\tau_{j})\log \beta(q_{k-1}) - \left(\sum\limits_{i\in T_{2}}\tau_{i}\delta_{i}\right)\log \beta(q_{k-1}) -\left(\sum\limits_{i=1}^{n}(1-\tau_{i})\delta_{i}\right)\left(\sum\limits_{i=1}^{k-1}\log \beta(q_{j}) \right)}{-\log \beta(q_{k-1})} \\[2ex]
     \hspace{2cm} +   \frac{\varepsilon}{2} \end{array}\right)\right\}\, ,
\end{align*}
by \eqref{epsilon size 2}. Now 
\begin{align*}
     N_{\nu,r,k} &\geq \min_{j=u,v} \left\{ \frac{1}{\tau_{j}}\left( \begin{array}{c}\left(\sum\limits_{i\in T_{1}}\delta_{i}\tau_{j}\right) + \left(\sum\limits_{i\in T_{2}}\tau_{i}\delta_{i}\right) 
      + \, \, \frac{\left(\sum\limits_{i=1}^{n}(1-\tau_{i})\delta_{i}\right)\left(\sum\limits_{i=1}^{k-1}\log \beta(q_{j}) \right)}{\log \beta(q_{k-1})}
     +\frac{\varepsilon}{2} \end{array} \right)\right\}\\
     &\geq \min_{j=u,v} \left\{ \frac{1}{\tau_{j}}\left( 
     \left(\sum\limits_{i\in T_{1}}\delta_{i}\tau_{j}\right) + \left(\sum\limits_{i\in T_{2}}\tau_{i}\delta_{i}\right) + \, \left(\sum\limits_{i=1}^{n}(1-\tau_{i})\delta_{i}\right)\left(1+ \frac{\left(\sum\limits_{i=1}^{k-2}\log \beta(q_{j}) \right)}{\log \beta(q_{k-1})}\right)
     +\frac{\varepsilon}{2} 
     \right)\right\}\\
     &\geq \min_{j=u,v} \left\{ \frac{1}{\tau_{j}}\left( \begin{array}{c} \left(\sum\limits_{i\in T_{1}}\delta_{i}\tau_{j}\right) + \left(\sum\limits_{i\in T_{2}}\tau_{i}\delta_{i}\right) 
     + \, \left(\sum\limits_{i=1}^{n}(1-\tau_{i})\delta_{i}\right)\left(1+ \alpha_{S}\right)
     +\varepsilon \end{array} \right)\right\}\, , \\
\end{align*}
by \eqref{epsilon size 1}. Splitting the third summation into the two components we get
\begin{align*}
     N_{\nu,r,k} &\geq \min_{j=u,v} \left\{ \frac{1}{\tau_{j}}\left( \begin{array}{c} \delta -\sum\limits_{i=1}^{n}\tau_{i}\delta_{i}+  \left(\sum\limits_{i\in T_{1}}\delta_{i}\tau_{j}\right) + \left(\sum\limits_{i\in T_{2}}\tau_{i}\delta_{i}\right) 
     + \, \left(\sum\limits_{i=1}^{n}(1-\tau_{i})\delta_{i}\right) \alpha_{S}
     +\varepsilon \end{array} \right)\right\}\\
     & \geq \min_{j=u,v} \left\{ \frac{1}{\tau_{j}}\left( \delta + \left(\sum_{i\in T_{1}}(\tau_{j}-\tau_{i})\delta_{i}\right)  +\left(\sum_{i=1}^{n}(1-\tau_{i})\delta_{i}\right)\alpha_{S}
     +\varepsilon \right)\right\} \\
      & \geq \min_{j=u,v} \left\{ \frac{1}{\tau_{j}}\left( \delta - \left(\sum_{i=1}^{n}(\tau_{i}-1)\delta_{i}\right)\alpha_{S} + \left(\sum_{i:\tau_{j}>\tau_{i}}(\tau_{j}-\tau_{i})\delta_{i}\right) + \varepsilon \right)\right\} =s_{u,v}\, .
\end{align*}
The last line follows on the observation that for $k$ sufficiently large and $j$ fixed, the sets $\{i:\tau_{j}>\tau_{i}\}$ and $T_{1}\cup \{j\}$ are the same. Clearly, the fact that $j$ is omitted does not affect the appearing summation.
This completes case ii).\\

    \item[iii)] $r \leq \beta(q_{k-1})^{-\max_{1\leq i \leq n} \tau_{i}}$: We calculate
    \begin{equation*}
        \nu\left(B(x,r) \cap R_{k-1}\right)\leq \underset{B(x,r)\cap R_{k} \neq \emptyset}{\sum_{R_{k}\in L_{k}^{S}(R_{k-1})}} \nu(R_{k}).
    \end{equation*}
    Since $B(x,r)$ intersects at least two rectangles in $L_{k}^{S}$, and $x$ is contained in one of them, we have that $r>\tfrac{1}{2}\beta(q_{k})^{-1}$. Hence
    \begin{align*}
        \#\left\{ R_{k}\in L_{k}^{S}: B(x,r)\cap R_{k} \neq \emptyset \right\} & \leq \#\left\{p \in \prod_{i=1}^{n}P_{i}(q_{k}):p\in\prod_{i=1}^{n}B_{i}(x_{i},4r)\right\} \\
        & \overset{\text{Lemma~\ref{vol_arg}}}{\leq} \left(\frac{c_{2}}{c_{1}}\right)^{n}2^{3\delta+n}(r\beta(q_{k}))^{\delta}.
    \end{align*}
    So
    \begin{align*}
        \nu(B(x,r) \cap R_{k-1})& \leq \frac{c_{2}}{c_{1}}2^{n+3\delta}(r\beta(q_{k}))^{\delta} \nu(R_{k}) \\
        & \leq \frac{c_{2}}{c_{1}}2^{n+3\delta}(r\beta(q_{k}))^{\delta} \prod_{i=1}^{k} \frac{1}{\#L_{i}^{S}(R_{i-1})} \\
        &\overset{\eqref{L-count}}{\leq} (r\beta(q_{k}))^{\delta}  \left(\frac{c_{2}}{c_{1}}\right)^{k+1}2^{(n+3\delta)+2k\delta} \beta(q_{1})^{-\delta}\prod_{i=2}^{k}\beta(q_{i})^{-\delta_{i}}\beta(q_{i-1})^{\sum\limits_{i=1}^{n}\tau_{i}\delta_{i}} \\
        &\leq 2^{n+3\delta}\left(\frac{c_{2}}{c_{1}}2^{(2\delta)}\right)^{(k+1)} r^{\delta}\prod_{j=1}^{k-1}\beta(q_{j})^{\sum\limits_{i=1}^{n}(\tau_{i}-1)\delta_{i}}.
    \end{align*}
    Hence the H\"{o}lder exponent can be calculated to be
    \begin{align*}
        \frac{\log \nu(B(x,r))}{\log r}&\geq \frac{\delta\log r + \left( \sum\limits_{i=1}^{n}(\tau_{i}-1)\delta_{i}\right)\left(\sum\limits_{j=1}^{k-1}\log \beta(q_{j})\right)}{\log r} + \frac{(k+1)}{\log r}\log\left(\frac{c_{2}}{c_{1}}2^{(2\delta)}\right)+\frac{\log 2^{n+3\delta}}{\log r}\\
        &\overset{\eqref{epsilon size 2}}{\geq} \delta + \frac{ \left( \sum\limits_{i=1}^{n}(\tau_{i}-1)\delta_{i}\right)\left(\sum\limits_{j=1}^{k-1}\log \beta(q_{j})\right)}{\log r}-\left(\max_{1\leq i \leq n}\tau_{i}\right)^{-1}\frac{\varepsilon}{2} \\
        & \hspace{-1cm}\overset{\hspace{0.3cm}\left(r<\beta(q_{k-1})^{-\max\limits_{1\leq i \leq n}\tau_{i}}\right)}{\geq} \frac{1}{\max\limits_{1\leq i \leq n}\tau_{i}} \left( \delta \max\limits_{1\leq i \leq n}\tau_{i} - \left(\sum\limits_{i=1}^{n}(\tau_{i}-1)\delta_{i}\right)\frac{\sum\limits_{j=1}^{k-1} \log \beta(q_{j})}{\log \beta(q_{k-1})} \right)-\left(\max_{1\leq i \leq n}\tau_{i}\right)^{-1}\frac{\varepsilon}{2} \\
        & \overset{\eqref{epsilon size 1}}{\geq} \frac{1}{\max_{1\leq i \leq n}\tau_{i}} \left( \delta \max\limits_{1\leq i \leq n}\tau_{i} - \left(\sum\limits_{i=1}^{n}(\tau_{i}-1)\delta_{i}\right)(\alpha_{S} +1)-\frac{\varepsilon}{2} \right) -\left(\max_{1\leq i \leq n}\tau_{i}\right)^{-1}\frac{\varepsilon}{2}\\
        &= \frac{1}{\max\limits_{1\leq i \leq n}\tau_{i}} \left( \delta  - (\alpha_{S}-\varepsilon)\sum\limits_{i=1}^{n}(\tau_{i}-1)\delta_{i} + \sum\limits_{i=1}^{n}\left( \left(\max\limits_{1\leq i \leq n} \tau_{i}\right) - \tau_{i}\right)\delta_{i}  -\varepsilon  \right) \\ 
        &=s_{\max}\, .
    \end{align*}
    This completes case iii).\\
\end{enumerate}
Compiling the results of i)-iii) we have that
\begin{equation*}
    \mu(B(x,r))\ll r^{\min\limits_{1\leq u,v\leq n}\left\{s_{\min},s_{\max},s_{u,v}\right\}}
\end{equation*}
Note $\varepsilon>0$ was arbitrary in the values of $s_{\min},s_{\max}$ and each $s_{u,v}$. Thus letting $\varepsilon\to 0$ and noting that
\begin{equation*}
    \min_{1\leq u,v\leq n}\left\{s_{\min},s_{\max},s_{u,v}\right\}=s
\end{equation*}
gives us
\begin{equation*}
    \dimh \Lambda_{\cQ}^{S}(\bt) \geq s
\end{equation*}
via Lemma~\ref{Mass distribution principle} hence completing the proof of the lower bound.

%
%

\subsection{Proof of Theorem~\ref{corollary}} \label{Section:CorollaryProof}
To prove Theorem~\ref{corollary} via Theorem~\ref{main} note that by the countable stability of the Hausdorff dimension
\begin{equation*}
    \dimh \widehat{\Lambda}_{\cQ}^{S}(\bt) = \sup_{t\in\N}\dimh \Lambda_{\cQ}^{\sigma^{t}S}(\bt).
\end{equation*}
Observe that $h\geq h_{\sigma^{t}S}$ for any $t\in\N$. For $\bt$ chosen in Theorem~\ref{corollary} it may be true that there exists $\tau_{i}$ with $\tau_{i}>h_{\sigma^{t}S}$. However, these sets have dimension less than or equal to the exact dimension result of Theorem~\ref{main} so can be ignored when considering the supremum over $t\in\N$. Thus without loss of generality assume that for each $1\leq i \leq n$ $h>\tau_{i}>1$. Furthermore choose $t_{0}$ sufficiently large such that 
\begin{equation*}
    h>h_{\sigma^{t}S}>\tau_{i} \quad \forall 1\leq i \leq n
\end{equation*}
for all $t\geq t_{0}$. Thus by Theorem~\ref{main}
\begin{equation*}
    \dimh \widehat{\Lambda}_{\cQ}^{S}(\bt) = \sup_{t\in\N} s =s
\end{equation*}
completeing the proof.

%
%

\subsection{Proof of Theorem~\ref{interesting?}}

Note that if
\begin{equation*}
    \lim_{j\to \infty} \frac{\log q_{j}}{\log q_{j-1}}=k
\end{equation*}
then for any sufficiently small $\varepsilon>0$ there exists $j_{0}\in \N$ such that for all $j>j_{0}$
\begin{equation*}
    (k-\varepsilon)\log q_{j-1}<\log q_{j}< (k+\varepsilon)\log q_{j-1}.
\end{equation*}
So
\begin{equation*}
    \frac{\sum\limits_{i=1}^{j-1}\log q_{i}}{\log q_{j}}< \sum\limits_{i=1}^{j-1}(k+\varepsilon)^{-i}=\frac{1-(k+\varepsilon)^{-(j-1)}}{k-1+\varepsilon},
\end{equation*}
and similarly for the lower bound
\begin{equation*}
    \frac{\sum\limits_{i=1}^{j-1}\log q_{i}}{\log q_{j}}>
\frac{1-(k-\varepsilon)^{-(j-1)}}{k-1-\varepsilon}.
\end{equation*}
Thus, as $k>1$ and $0<\varepsilon$ can be chosen arbitrarily small ($\varepsilon<k-1$), taking the limit as $j \to \infty$ gives us that
\begin{equation*}
    \alpha_{S}=\lim_{j\to \infty} \frac{\sum\limits_{i=1}^{j-1}\log q_{i}}{\log q_{j}} =\frac{1}{k-1}.
\end{equation*}
Applying Corollary~\ref{real_corollary} with $h_{S}=k$, $\alpha_{S}=\frac{1}{k-1}$, and $0<\tau_{i}<h_{S}-1$ for each $1\leq i \leq n$ gives us Theorem~\ref{interesting?}.
\bibliographystyle{plain}
\bibliography{biblioo}
\end{document}